\documentclass[12 pt]{amsart}

\usepackage{latexsym, amsmath, amssymb, longtable, booktabs,amscd}
\usepackage[square]{natbib}
\bibliographystyle{abbrvnat}
\usepackage[english]{babel}
\usepackage[utf8x]{inputenc}
\usepackage{hyperref}
\usepackage{comment}
\bibpunct{[}{]}{,}{n}{}{;}
\usepackage{etoolbox}

\usepackage[colorinlistoftodos]{todonotes}
\usepackage{graphicx}
\usepackage{cleveref}
\usepackage{amsthm}
\numberwithin{equation}{section}

\textheight=8.21in
\textwidth=6.25in
\oddsidemargin=.25in
\evensidemargin=.25in
\topmargin=0in
\headheight=.1in
\headsep=.5in
\footskip=.75in
\usepackage{tikz-cd}

\usepackage{stackengine}

\newcommand\N{\mathbb{N}}

\newcommand\I{{\mathbb{I}}}

\newcommand\cO{\mathcal{O}}

\newcommand\fp{\mathfrak{p}}
\newcommand{\fa}{\mathfrak{a}}

\newcommand{\Q}{\mathbb{Q}}

\newcommand{\Z}{\mathbb{Z}}

\newcommand{\F}{\mathbb{F}}

\newcommand\Gal{{\mathrm {Gal}}}

\newcommand{\Cl}{C\ell}
\DeclareMathOperator{\lcm}{lcm}
\DeclareMathOperator{\rank}{rank}
\DeclareMathOperator{\sgn}{sgn}
\DeclareMathOperator{\Nm}{Nm}
\DeclareMathOperator{\li}{li}
\newtheorem{thm}{Theorem}[section]
\newtheorem{theorem}[thm]{Theorem}
\newtheorem{corollary}[thm]{Corollary}
\newtheorem{conj}[thm]{Conjecture}
\newtheorem{proposition}[thm]{Proposition}
\newtheorem{lemma}[thm]{Lemma}
\theoremstyle{definition}
\newtheorem{definition}{Definition}
\theoremstyle{remark}
\newtheorem{remark}{Remark}

\title[Low degree extensions with Cyclic class group]{Low degree extensions with  Cyclic class group}

\author{Sunil Kumar Pasupulati }
\address{The Institute of Mathematical Sciences, Chennai, 
	IV Cross Road, CIT Campus, Taramani, Chennai - 600 113,
	Tamil Nadu, India.}
\email{ sunilkp@imsc.res.in} 
\author{ SriLakshmi Krishnamoorthy }
\address{ Srilakshmi Krishnamoorthy
	\newline
	INDIAN INSTITUTE OF SCIENCE EDUCATION AND RESEARCH, THIRUVANANTHAPURAM, INDIA.}
\email{srilakshmi@iisertvm.ac.in}

\subjclass[2010]{Primary:11A05, Secondary 11R29.}

\keywords{Euclidean algorithm for number fields,
	Euclidean ideal class, Ideal class group, Hilbert class field.}

\begin{document}
	
	\begin{abstract}
		Lenstra introduced the notion of the Euclidean ideal class, a generalization of the Euclidean domain that captures cyclic class groups. In this article, we establish the existence of Euclidean ideal classes in abelian quartic fields. As a corollary, we demonstrate that certain biquadratic fields with class number two possess a Euclidean ideal class. Additionally, we investigate the presence of Euclidean ideal classes in specific cubic and quadratic extensions.   
	\end{abstract}
	\maketitle
	\section{Introduction and Results}
	Let us consider a number field denoted as $K$, with $\mathcal{O}_K$ representing the number ring of $K$ and $\mathcal{O}_K^\times$ denotes the units of $\cO_K$. The class group, denoted as $\Cl_K$, is defined as the quotient group $J_K/P_K$, where $J_K$ is the group of fractional ideals, and $P_K$ is the group of principal fractional ideals in $K$. We use $H(K)$ to represent the Hilbert class field of $K$. Now, let  $K/\mathbb{Q}$ be an abelian extension,   then the conductor of  $K$, denoted as $f(K)$, is defined as the smallest natural number for which $K$ is a subset of $\mathbb{Q}(\zeta_{f(K)})$.
	It is also worth noting that the conductor of the Hilbert class field H(K) remains equal to $f(K)$ whenever the extension $H(K)/\mathbb{Q}$ is abelian. Additionally, we use $K_1K_2$ to represent the compositum of the number fields $K_1$ and $K_2$.
	
	The relationship between Euclidean ideals and the class group has its origins in the work of Weinberger. In 1973, assuming a generalized Riemann hypothesis (GRH), Weinberger \cite{WP}, demonstrated that for a number field $K$ where the rank of $\mathcal{O}_K^\times$ is at least 1, the number ring $\mathcal{O}_K$ is  a Euclidean domain if and only if  the class group $\Cl_K$ is trivial.
	In 1979, Lenstra extended the concept of Euclidean domains by introducing the idea of Euclidean ideal classes, as defined in \ref{EI}, to specifically  capture  cyclic class groups. As detailed in \cite{len},  he established that for a number field $K$ with a rank of $\mathcal{O}_K^\times$ greater than or equal to 1, the number ring $\mathcal{O}_K$ contains a Euclidean ideal if and only if the class group $\Cl_K$ is cyclic, assuming GRH holds.
	
	\begin{definition} [Lenstra \cite{len}] \label{EI}	Let $R$ be a Dedekind domain and $\I$ be the set of non-zero integral ideals of $R$. The ideal  $C \in \I$   is called a  {\it Euclidean ideal} if there exists a function $\Psi:\I \to W$, where $W$ is  a well-ordered set, such that for every $I \in \I $ and  $x\in I^{-1}C \setminus C$,  there exists a  $y\in C$ such that 
		\begin{align*}
			\Psi\left( (x-y)IC^{-1} \right) < \Psi (I).
		\end{align*}
		We say  $\Psi$ is a Euclidean map for $C$.
		If $C$ is a Euclidean ideal, then every ideal in the ideal class $[C]$ is also a Euclidean ideal, and the ideal class $[C]$ is called  a {\it Euclidean ideal class.}
	\end{definition}
	The Euclidean ideal class is equivalent to the principal ideal generated by $1_R$ if and only if the ring $R$ is a Euclidean domain. Lenstra \cite{len} introduced the concept of Euclidean ideal classes in a more general setting for integral domains. However, he showed that if an integral domain contains a Euclidean ideal class, it inherently becomes a Dedekind domain with a cyclic class group.
	In this paper, we explore the presence of ideal classes within the rings of integers in number fields.
	
	Ram Murty and Graves in \cite{RG13}, managed to circumvent the need for assuming the Generalized Riemann Hypothesis (GRH) in Lenstra's result. They successfully established the existence of a Euclidean ideal class   in certain abelian number fields.
	\begin{theorem}[Ram Murty and H.Graves \cite{RG13}]
		Let $K$ be a number field that is Galois over $\Q$. If its Hilbert class field $H(K)$ has an abelian Galois group over $\Q$ and if $\rank(\cO_K ^{\times})\geq 4$, then
		$$\Cl_K = \langle [C]	\rangle \  \text { if and only if }\ [C] \ \text{is a Euclidean ideal class.} $$
	\end{theorem}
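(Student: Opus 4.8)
The statement is an equivalence, and essentially all of the content lies in one direction, so I would separate the two cleanly.

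\textbf{The easy direction.} If $[C]$ is a Euclidean ideal class, then $Cl_K = \langle [C]\rangle$ holds unconditionally; this is already part of Lenstra's foundational work \cite{len} (and is flagged in the text above: a domain with a Euclidean ideal class has cyclic class group, generated by that class). A short manipulation of the defining inequality in Definition \ref{EI} shows every class is a power of $[C]$, and this uses none of the three hypotheses (GRH, rank, or abelian $H(K)$). I would dispose of this in a few lines and spend all effort on the converse.

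\textbf{The hard direction: reduction to an admissible set.} Assume $Cl_K=\langle[C]\rangle$; I must construct a Euclidean algorithm $\Psi$ for $C$. The plan is the two-stage strategy that removed GRH from Weinberger's theorem in the Euclidean-\emph{domain} case (Harper--Murty), adapted to ideal classes (Graves). First I set up a Motzkin-type filtration $A_0\subseteq A_1\subseteq\cdots$ attached to the class $[C]$, where $A_0$ records the trivial cases and $A_{i+1}$ adjoins every element all of whose residues, in the sense of Definition \ref{EI}, already lie in $A_i$. Lenstra's criterion, sharpened by Graves, says that $[C]$ is Euclidean precisely when this filtration exhausts $\cO_K$. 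The key reduction (the "growth lemma") is that exhaustion follows once one exhibits an \emph{admissible set of primes}: an infinite set $\cS$ of prime ideals, each lying in the class $[C]$, such that for every prime $\mathfrak{q}$ in the relevant classes the reductions of $\cO_K^\times$ together with the primes of $\cS$ generate $(\cO_K/\mathfrak{q})^\times$. Given such an $\cS$, one shows the filtration grows quickly enough to cover everything, producing $\Psi$.

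\textbf{Producing the admissible set without GRH.} This is where both hypotheses enter and where the real difficulty lies. I need infinitely many primes $\mathfrak{p}$ in the class $[C]$ for which a fixed finite set of units is a primitive root, i.e. generates the cyclic group $(\cO_K/\mathfrak{p})^\times$. The primitive-root condition fails at $\mathfrak{p}$ for a prime $\ell$ exactly when $\mathfrak{p}$ splits completely in $K(\zeta_\ell, u_i^{1/\ell})$, so it is an Artin-type condition controlled by these Kummer extensions. The ideal-class condition "$[\mathfrak{p}]=[C]$" is a Chebotarev condition for $H(K)/\Q$; and because $H(K)/\Q$ is \emph{abelian}, class field theory converts it into a congruence condition on the rational prime below $\mathfrak{p}$ modulo the conductor $f(K)$. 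Abelianness is exactly what keeps this congruence condition compatible with adjoining the $\zeta_\ell$ and the radicals $u_i^{1/\ell}$, so that the composita have their expected degrees over $K$ and the local densities multiply correctly. To count the surviving primes unconditionally I would then invoke the large sieve in the Gupta--Murty form: with $r=\rank(\cO_K^\times)\ge 4$ multiplicatively independent units the sieve yields $\gg x/(\log x)^2$ such primes up to $x$, in particular infinitely many, giving the admissible set $\cS$.

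\textbf{The main obstacle.} Everything hinges on the last step: an unconditional lower bound for the number of primes in a prescribed ideal class that simultaneously satisfy the primitive-root condition. Under GRH this is routine Lenstra--Hooley bookkeeping, but the unconditional count forces the delicate large-sieve estimate, and it is precisely the interplay of the two hypotheses that rescues it --- $H(K)/\Q$ abelian to keep the splitting and Kummer conditions disentangled, and $\rank(\cO_K^\times)\ge 4$ to push the sieve error terms below the main term. I expect the bulk of the write-up to be verifying that the admissibility conditions translate into the exact field-degree computations the sieve requires, and then carrying out the sieve to guarantee infinitely many admissible primes.
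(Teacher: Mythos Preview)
The paper does not prove this theorem. It is quoted in the introduction as a known result of Ram Murty and Graves \cite{RG13}, serving only as context for the paper's own contributions; no argument for it appears anywhere in the text. So there is no ``paper's own proof'' to compare your proposal against.

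For what it is worth, your outline matches the actual structure of \cite{RG13} reasonably well: Lenstra handles the easy direction, and the hard direction runs through Graves's growth result (stated here as Theorem~\ref{G1}), which already packages the Motzkin-filtration machinery and reduces everything to the single estimate
\[
\bigl|\{\mathfrak p:\fN(\mathfrak p)\le x,\ [\mathfrak p]=[C],\ \cO_K^\times\twoheadrightarrow(\cO_K/\mathfrak p)^\times\}\bigr|\gg \frac{x}{(\log x)^2},
\]
after which a Gupta--Murty sieve argument, using $\rank(\cO_K^\times)\ge 4$ multiplicatively independent units and the abelianness of $H(K)/\Q$ to control the relevant Kummer and splitting-field degrees, supplies the count. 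Your ``admissible set'' language is slightly off --- the reduction is directly to the growth-lemma inequality rather than to an auxiliary set of primes --- but the substance is right.
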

	Deshouillers, Gun, and Sivaraman \cite{DGS}  proved the existence of the Euclidean ideal class in number fields with  $\rank(\cO_K^{\times} )\geq 3$  with an additional assumption that the Galois group $ \Gal\left(\Q(\zeta_{f(K)})/K\right)$ is cyclic. They proved the following theorem using the linear sieve with the error term given by  Iwaniec \cite{errorterm80}. 
	\begin{theorem} [Deshouillers, Gun, and Sivaraman \cite{DGS}]\label{Jh}
		Let $K$ be a number field such that rank  $\left(\cO_K ^{\times}\right)\geq 3$  and the Hilbert class field $H(K)$ is abelian over $\Q$. Also suppose that the conductor of $H(K)$ is $f(K)$ and $\Q(\zeta_{f(K)})$ over $K$ is cyclic. Then $K$ has a Euclidean ideal class.
	\end{theorem}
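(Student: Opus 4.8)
The plan is to reduce the existence of a Euclidean ideal class to an unconditional lower bound for a family of ``admissible'' prime ideals, and then to produce those primes by a sieve. First I would verify that the hypotheses force $Cl_K$ to be cyclic: since $H(K)$ is abelian over $\Q$ with conductor $f$, the Kronecker--Weber theorem gives the tower $K \subseteq H(K) \subseteq \Q(\zeta_f)$, and the isomorphism $\Gal(H(K)/K) \cong Cl_K$ exhibits $Cl_K$ as a quotient of the cyclic group $\Gal(\Q(\zeta_f)/K)$. Hence $Cl_K = \langle [C]\rangle$ for some ideal class $[C]$, and this $[C]$ is the candidate Euclidean ideal class.

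Next I would invoke Lenstra's growth criterion in the sharpened form used by Graves and Murty: if $Cl_K = \langle [C]\rangle$ and there is a set $\cP$ of prime ideals, each lying in the class $[C]$, for which the reduction map $\cO_K^{\times} \to (\cO_K/\fp)^{\times}$ is surjective, and if
\[
\#\{\fp \in \cP : N\fp \le x\} \gg \frac{x}{(\log x)^2},
\]
then $[C]$ is a Euclidean ideal class. This converts the problem into a primitive-root problem over $K$: I must count degree-one primes $\fp$ that simultaneously lie in the generating class and admit the global units as a generating set of the residue multiplicative group.

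The third step is to translate both conditions into splitting conditions in a tower of Galois extensions and apply Chebotarev together with sieve methods. The class condition $[\fp] = [C]$ is a fixed Frobenius condition in $H(K)/K$, while the primitive-root condition is the requirement that $\fp$ fail to split completely in any of the Kummer extensions $K(\zeta_\ell, \varepsilon_1^{1/\ell}, \dots, \varepsilon_r^{1/\ell})$ attached to a basis $\varepsilon_1,\dots,\varepsilon_r$ of $\cO_K^{\times}$ modulo torsion and to each rational prime $\ell$. Running an inclusion--exclusion over $\ell$, the main term is a sum of products of Chebotarev densities; here the cyclicity of $\Gal(\Q(\zeta_f)/K)$ is exactly what guarantees that the relevant cyclotomic and Kummer fields are suitably linearly disjoint from $H(K)$, so that the densities factor cleanly and the main term is strictly positive.

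The hard part will be the error analysis in this third step: making the inclusion--exclusion unconditional requires controlling the contribution of the moduli $\ell$ in the range where the Chebotarev density theorem is not directly applicable without GRH. This is precisely where the hypothesis $\rank(\cO_K^{\times}) \ge 3$ and Iwaniec's linear-sieve error term enter. With at least three independent units the Kummer extensions $K(\zeta_\ell,\varepsilon_i^{1/\ell})$ have degree growing like $\ell^{3}$ up to bounded factors, which renders the error contributions summable and allows the linear sieve, in the style of Gupta--Murty--Murty and Heath-Brown, to produce a lower bound of the required order $x/(\log x)^2$ unconditionally. Feeding this bound back into the growth criterion of the second step then yields that $[C]$ is a Euclidean ideal class, completing the proof.
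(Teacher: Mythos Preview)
This theorem is not proved in the present paper; it is quoted from Deshouillers--Gun--Sivaraman \cite{JSJ} as background, with only the one-line remark that ``they proved the following theorem by using an error term of the linear sieve given by Iwaniec.'' So there is no proof here against which to compare your proposal directly.

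That said, your outline is in the right spirit and matches both that description and the structure of the paper's own proofs of Theorems~\ref{thm3}, \ref{thm1} and \ref{thm2}. Two points where your sketch is looser than the actual mechanism: First, the role of cyclicity of $\Gal(\Q(\zeta_f)/K)$ is not really ``linear disjointness of Kummer fields from $H(K)$''; rather, a generator $\sigma$ of this cyclic group lies in no proper subgroup, hence $\sigma\notin\Gal(\Q(\zeta_f)/\Q(\zeta_l))$ for each odd prime $l\mid f$ and for $l=4$, and $\sigma\notin\Gal(\Q(\zeta_f)/H(K))$. The corresponding residue class $a\bmod f$ then satisfies $\gcd\bigl((a-1)/2,f\bigr)=1$, which is precisely the arithmetic input required by the Heath-Brown sieve (Lemma~\ref{HBL} here). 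Second, the sieve is run on rational primes $p\equiv a\bmod f$ to produce primes with $(p-1)/2$ a prime or a product of two primes in controlled ranges, and only afterwards lifted to degree-one primes $\fp$ of $K$; the surjection $\cO_K^{\times}\twoheadrightarrow(\cO_K/\fp)^{\times}$ is then obtained from three multiplicatively independent units via a Gupta--Murty/Heath-Brown type argument (Lemma~\ref{lm2}), not by a full inclusion--exclusion over all primes $\ell$ as you describe. Finally, one does not fix the target class $[C]$ in advance: one shows that all the $\fp$ produced lie in generating classes and then pigeonholes among the finitely many classes, as in the proof of Theorem~\ref{thm3}. Your plan is not wrong, but it sits one level of abstraction above the concrete argument.
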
 
	\begin{remark}
		Let us consider the example of a number field, $K$, given as $K=\Q(\sqrt{11},\sqrt{247})$. Despite the fact that the rank of $\mathcal{O}_K^\times$ is $3$ in this real quartic field $K$, we cannot conclude that $K$ possesses a Euclidean ideal by applying Theorem \ref{Jh}. This is because the Galois group $\Gal(\Q(\zeta_f)/K) \cong \Z/10\Z\times \Z/12\Z\times \Z/9\Z$ is not cyclic, where $f=10808$ represents the conductor of the Hilbert class field of $K$. As a result, one of the assumptions in Theorem \ref{Jh} is not met. However, we can indeed ascertain that $K$ has a Euclidean ideal by utilizing Theorem \ref{thm41}.
		
	\end{remark}
	We prove the existence of a Euclidean ideal class in abelian real quartic fields with a prime class number without the assumption that $\Q(\zeta_f)$ over $K$ is cyclic. We also prove    Corollary \ref{corq} that almost settles the discussion on the existence of Euclidean ideal classes in certain biquadratic fields with class number two.

	\begin{theorem}\label{thm41}
		Let $K$ be a real quartic extension with prime class number and abelian (over $\Q$)  Hilbert class field.
		Define  
		$f$ as the least common multiple of $16$ and the conductor of $K$. Let $G$ to denote the Galois group of $\mathbb{Q}(\zeta_f)$ over $K$ and $G_{\ell}$,  the Galois group of
		$\Q(\zeta_f) $ over $\Q(\zeta_{\ell})$.  If 
		\begin{align*}
			G \not \subseteq 	\bigcup_{\ell} G_{\ell}\bigcup \Gal\left({\Q(\zeta_f)}/{H(K)}\right),
		\end{align*}
		where $\ell$ is an odd prime dividing $f$ or $\ell=4$, then $K$ has a Euclidean ideal class.
	\end{theorem}
    The study of the Euclidean ideals in biquadratic fields caught many eyes, for example, look at \cite{G1,CH,JS19,sp3}. The following corollary generalizes the main results of \cite{CH},~\cite{JS19}, and   \cite{sp3}.
\begin{corollary}\label{corq}	
		Let  $q,r,s$ be distinct primes and  $K=\Q\left(\sqrt{q},\sqrt{rs}\right)$ be a totally real biquadratic field with $h_K=2$. If one of the following conditions is satisfied:  
\begin{enumerate}
    \item $q \equiv 3 \pmod{4}$
    \item At least one of $r$ or $s$ is $1 \pmod{4}$,
\end{enumerate}
then $C\ell_K$ has a Euclidean ideal class. 	
\end{corollary}

We have examined the existence of the Euclidean ideal classes in the biquadratic field  $K_2=\Q\left(\sqrt{2},\sqrt{qr}\right)$.
The following corollary will remove the assumption that both the primes  $q$ and $r$ are congruent to $1\mod4$ from  \cite[Theorem 1.5]{JS19}.
\begin{corollary}\label{corprime2}
	Let $q,r$ be odd primes. If at least one of 
	$q$ or $r$ is congruent to $1$ modulo $4$, then $K_2=\Q(\sqrt{2},\sqrt{qr})$ has a Euclidean ideal class whenever $C\ell_K$ has order 2. 
\end{corollary}
	
		\begin{corollary}\label{cor32}
		Let $q,r$ be primes such that $\Q(\sqrt{q}) $ and $\Q(\sqrt{r})$ have class number one and $\Q(\sqrt{qr})$ have class number $2p$, where p is prime. If all the quadratic subfields of the biquadratic number field $K=\Q(\sqrt{q},\sqrt{r})$,  have deficiency $0$  and   $K$ has deficiency $1$ (see \Cref{deficiency}), then $K$ has a  Euclidean ideal.
	\end{corollary}
	
	The article is structured as follows: In Section~\ref{sec:prelims}, we provide essential preliminaries that will aid in establishing our main results. Section~\ref{sec:quartic} is dedicated to the proof of \Cref{thm41}. In Section~\ref{sec:biquadratic}, we delve into the examination of the existence of Euclidean ideals in biquadratic extensions  and subsequently prove  \Cref{corq},\Cref{corprime2}, and \Cref{cor32}.
	Section~\ref{sec:cubic-quadratic} focuses on exploring the presence of Euclidean ideals in both cubic and quadratic extensions.
	In Section~\ref{sec:conj} we discuss a consequence of the Elliott-Halberstam conjecture. Finally, in the last section, we discuss some examples.

	\section{A few useful preliminaries and  results }\label{sec:prelims}
	\begin{definition}
		Let $K$ be a number field. The Hilbert class field $H(K)$ of $K$ is the maximal unramified abelian extension of $K$.
	\end{definition}
	\begin{lemma}\label{bl1}
		The conductor  of the quadratic field $K=\Q(\sqrt{d}) $  is  
		\begin{align*}
			f(K)=\begin{cases}
				d, &  \text{if} ~ d\equiv 1 \mod{4},\\
				4d, & \text{if} ~ d\equiv 2,3 \mod{4}.
			\end{cases}
		\end{align*}
	\end{lemma}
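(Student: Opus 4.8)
The plan is to read $f(K)$ as the smallest modulus $m$ with $\Q(\sqrt d)\subseteq\Q(\zeta_m)$, after reducing to squarefree $d$ (legitimate since $\Q(\sqrt d)$ depends only on the squarefree part of $d$). I would prove the two divisibilities separately: an \emph{upper bound}, $f(K)$ divides the claimed value, by exhibiting $\sqrt d$ explicitly inside the relevant cyclotomic field; and a \emph{lower bound}, the claimed value divides $f(K)$, by ramification. Throughout I write $|d|$ for the absolute value, which equals $d$ in the real case.

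For the upper bound I would use quadratic Gauss sums. For an odd prime $p$, setting $p^{*}=(-1)^{(p-1)/2}p$, the sum $\sum_{a}\left(\frac{a}{p}\right)\zeta_p^{a}$ squares to $p^{*}$, so $\sqrt{p^{*}}\in\Q(\zeta_p)$; moreover $i\in\Q(\zeta_4)$ and $\sqrt2=\zeta_8+\zeta_8^{-1}\in\Q(\zeta_8)$. Writing the odd part of $d$ as $\pm\prod_i p_i$ and using $p_i^{*}\equiv 1\pmod 4$, the product $\prod_i\sqrt{p_i^{*}}=\sqrt{\prod_i p_i^{*}}$ lies in $\Q(\zeta_{\prod_i p_i})$, with $\prod_i p_i^{*}\equiv 1\pmod 4$. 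A short sign count mod $4$ then shows: if $d\equiv 1$, this product already equals $\sqrt d$, so $\sqrt d\in\Q(\zeta_{|d|})$; if $d\equiv 3$, one multiplies by $i$ to fix the sign, landing in $\Q(\zeta_{4|d|})$; and if $d\equiv 2$, one factors off $\sqrt{\pm2}\in\Q(\zeta_8)$ and handles the odd cofactor as before, landing in $\Q(\zeta_{4|d|})$. Hence $K$ lies in the cyclotomic field of the asserted conductor, so $f(K)$ divides it.

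For the lower bound I would invoke ramification: if $K\subseteq\Q(\zeta_{f(K)})$, every prime ramifying in $K$ ramifies in $\Q(\zeta_{f(K)})$ and so divides $f(K)$. The odd primes ramifying in $\Q(\sqrt d)$ are exactly the odd prime divisors of $d$, so each $p_i\mid f(K)$, and being distinct their product divides $f(K)$. Combined with the upper bound this already settles $d\equiv 1\pmod 4$, where $f(K)=|d|$. The prime $2$ ramifies in $K$ precisely when $d\equiv 2,3\pmod 4$, and since $2$ ramifies in $\Q(\zeta_n)$ exactly when $4\mid n$, we obtain $4\mid f(K)$ in those two cases, which finishes $d\equiv 3$.

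I expect the real obstacle to be $d\equiv 2\pmod 4$, where one must separate the exponent $2$ from $3$ in the $2$-part, i.e. show $8\mid f(K)$ rather than merely $4\mid f(K)$. The clean resolution is to examine the quadratic subfields of the $2$-cyclotomic tower: $\Q(\zeta_4)$ contains only $\Q(i)$, whereas $\sqrt 2$ and $\sqrt{-2}$ first appear in $\Q(\zeta_8)$, so a field $\Q(\sqrt{2\cdot(\mathrm{odd})})$ cannot sit inside $\Q(\zeta_{4\cdot(\mathrm{odd})})$. Equivalently one reads $v_2(f(K))$ off the $2$-adic conductor of the quadratic character $\chi_d$, or off $v_2$ of the discriminant via the conductor--discriminant formula (under which $f(K)=|d_K|$ directly). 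Either route forces $8\mid f(K)$; together with divisibility by the odd part of $d$ and the upper bound $f(K)\mid 4|d|=8\cdot(\mathrm{odd})$, this gives $f(K)=4|d|$ and completes every case.
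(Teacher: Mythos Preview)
The paper states this lemma as a preliminary fact and gives no proof at all, so there is nothing to compare against on the paper's side. Your argument is correct and is the standard one: Gauss sums furnish the explicit inclusion $\sqrt d\in\Q(\zeta_m)$ for the asserted $m$, ramification forces every odd prime divisor of $d$ into $f(K)$, and the careful analysis of quadratic subfields of the $2$-power cyclotomic tower (or equivalently the conductor--discriminant formula, which gives $f(K)=|d_K|$ in one stroke) pins down the exact power of $2$. The only place requiring real attention is indeed the case $d\equiv 2\pmod 4$, and you handle it correctly by observing that $\sqrt{\pm 2}$ first appears in $\Q(\zeta_8)$, not $\Q(\zeta_4)$.

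One small remark: the paper's formula writes $f(K)=d$ rather than $|d|$, so it is implicitly taking $d>0$ (consistent with its later application to real biquadratic fields). Your version with $|d|$ is the correct statement without that restriction.
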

	\begin{lemma}\label{bl2}
		Let $L$ be  the compositum of $K_1$ and $K_2$, then the  conductor of the number field $L$,  $f(L)$ is  equal to  $\lcm(f(K_1),f(K_2))$.
	\end{lemma}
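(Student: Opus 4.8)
The plan is to derive the identity directly from the definition of the conductor as the least $n \in \N$ with $K \subseteq \Q(\zeta_n)$, together with the elementary arithmetic of cyclotomic fields. Write $f_1 = f(K_1)$, $f_2 = f(K_2)$, and $m = \lcm(f_1, f_2)$; note that since $K_1, K_2$ are abelian over $\Q$ the compositum $L = K_1 K_2$ is as well, so $f(L)$ is defined.

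First I would record the key structural fact that the set $S_K = \{ n \in \N : K \subseteq \Q(\zeta_n) \}$ is closed under greatest common divisors, so that $f(K) = \min S_K$ in fact \emph{divides} every element of $S_K$. This follows from the standard intersection formula $\Q(\zeta_a) \cap \Q(\zeta_b) = \Q(\zeta_{\gcd(a,b)})$: if $a, b \in S_K$ then $K \subseteq \Q(\zeta_a) \cap \Q(\zeta_b) = \Q(\zeta_{\gcd(a,b)})$, so $\gcd(a,b) \in S_K$, and iterating shows $f(K) \mid n$ for every $n \in S_K$.

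Next, for the divisibility $f(L) \mid m$, I would use that $f_1 \mid m$ and $f_2 \mid m$ give the inclusions $\Q(\zeta_{f_1}), \Q(\zeta_{f_2}) \subseteq \Q(\zeta_m)$; hence $K_1, K_2 \subseteq \Q(\zeta_m)$, and therefore $L = K_1 K_2 \subseteq \Q(\zeta_m)$. Thus $m \in S_L$, and the first step yields $f(L) \mid m$. For the reverse divisibility $m \mid f(L)$, observe that $K_1 \subseteq L \subseteq \Q(\zeta_{f(L)})$ and likewise $K_2 \subseteq L \subseteq \Q(\zeta_{f(L)})$, so $f(L) \in S_{K_1} \cap S_{K_2}$; again by the first step $f_1 \mid f(L)$ and $f_2 \mid f(L)$, whence $m = \lcm(f_1, f_2) \mid f(L)$. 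Combining the two divisibilities gives $f(L) = m = \lcm(f_1, f_2)$.

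The only nontrivial ingredient is the intersection formula $\Q(\zeta_a) \cap \Q(\zeta_b) = \Q(\zeta_{\gcd(a,b)})$ underpinning the first step; everything afterward is formal manipulation of inclusions. So the main (and rather mild) obstacle is justifying that the conductor \emph{divides}—rather than merely is bounded above by—every modulus realizing the embedding into a cyclotomic field. Once that divisibility property is secured, the two one-line inclusions close the argument symmetrically.
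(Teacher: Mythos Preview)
Your argument is correct and is the standard proof of this fact: the key point is that the conductor divides every $n$ with $K\subseteq\Q(\zeta_n)$, which you obtain from the intersection formula $\Q(\zeta_a)\cap\Q(\zeta_b)=\Q(\zeta_{\gcd(a,b)})$, and the two divisibilities $f(L)\mid\lcm(f_1,f_2)$ and $\lcm(f_1,f_2)\mid f(L)$ then follow immediately. The paper itself states this lemma as a preliminary fact without proof, so there is nothing to compare against; your write-up supplies exactly the kind of short justification one would expect here.
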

	\begin{lemma}\label{comp}
		Let $K_1,K_2$  be Galois extensions of $K$. The compositum of $K_1$ and $K_2$ is a Galois extension of $K$ and  if $K_1,K_2$  are linearly disjoint extension of $K$,  then we have 
		\begin{align*}
			\Gal(K_1K_2/K) \cong  \Gal(K_1/K)\times \Gal(K_2/K).
		\end{align*}
	\end{lemma}

Let $K$ be a totally real number field of degree n, then  define the Archimedean signature map  
$	\sgn_\infty :K^{\times} \to \prod_{\nu \mid \infty } \{\pm1\} \cong  \F^n_2$
as the surjective group homomorphism recording the signs of elements of  $K^\times$  under each real embedding.
\begin{definition}
	The {\bf unit signature rank }of $K$ is defined to  be the dimension  of $\sgn_\infty \left(\cO_K^{\times}\right)$ as a subspace of  $\F_2^n$.
\end{definition}
\begin{definition}\label{deficiency}
	The  {\bf deficiency} of $K$, also known as a deficiency in the unit signature rank, is denoted by the symbol $\delta (K)$, and it is the corank of $\sgn_\infty \left(\cO_K^{\times}\right)$. More specifically, it equals $n$ minus the signature rank of the units of $K$.
\end{definition}

	Graves \cite{growth}  proved a useful growth result that gives a condition for the existence of a Euclidean ideal in number fields without the assumption of GRH.
	\begin{theorem}[Graves \cite{growth}]
		Suppose that $K$ is a number field such that $\rank(\cO_K ^{\times})  \geq 1$
		and the ideal $C$ is a non-zero ideal of $\cO_K$. If $[C]$  generates the class group of $K$ and
		\[\Big| \left\{ \text{Prime\ ideal}  \  \fp \subseteq  \cO_K~ \Big \vert~\Nm(\fp)\leq X, \ [\fp]=[C], \   \cO_K^{\times}\twoheadrightarrow (\cO_K/\fp) ^{\times} \right\}\Big|\gg \frac{X}{(\log X)^2},
		\]
		then $[C]$ is a Euclidean ideal class.
	\end{theorem}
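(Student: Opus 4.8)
The plan is to construct the Euclidean algorithm $\Psi$ of Definition~\ref{EI} by a transfinite stratification of the set $\I$ of integral ideals, in the spirit of Motzkin's criterion for Euclidean domains and its ideal-theoretic refinement due to Lenstra \cite{len}, and then to invoke the density hypothesis only at the last step, to show that the stratification exhausts $\I$. First I would record the convenient reformulation of the Euclidean condition: for an integral ideal $I$ and $x \in I^{-1}C \setminus C$, every $y \in C$ gives $x - y \neq 0$ with $(x-y)IC^{-1}$ an integral ideal of class $[I][C]^{-1}$, so a single Euclidean step replaces an ideal of class $[I]$ by one of class $[I][C]^{-1}$ while strictly lowering $\Psi$. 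Because $[C]$ generates $Cl_K$, the algorithm therefore descends through the cyclic class group one factor of $[C]$ at a time, and it suffices to control norms along the way.

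Next I would set up the strata. Let $\mathcal{A}_0 = \{\cO_K\}$, the unique ideal for which $I^{-1}C \setminus C$ is empty and the Euclidean condition is vacuous; for an ordinal $\alpha$ let $\mathcal{A}_{\alpha+1}$ consist of those $I$ such that for every $x \in I^{-1}C \setminus C$ there is a $y \in C$ with $(x-y)IC^{-1} \in \bigcup_{\beta \le \alpha} \mathcal{A}_\beta$, taking unions at limit stages. Setting $\Psi(I) = \min\{\alpha : I \in \mathcal{A}_\alpha\}$ produces a function into the (well-ordered) ordinals that satisfies the Euclidean inequality by construction, so Lenstra's criterion reduces the theorem to the single assertion $\bigcup_\alpha \mathcal{A}_\alpha = \I$.

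The mechanism that lifts an ideal from one stratum to the next is the supply of admissible primes, namely the $\fP$ with $[\fP] = [C]$ for which the units surject onto $(\cO_K/\fP)^\times$; this is where $\rank(\cO_K^\times) \ge 1$ enters, furnishing infinitely many units. Given $I$ and $x \in I^{-1}C \setminus C$, I would pick such an $\fP$ and use that $[\fP] = [C]$ to identify the one-dimensional quotient $I^{-1}C / \fP I^{-1}C$ with $\cO_K/\fP$; the admissibility of $\fP$ is precisely the condition ensuring that the residue of $x$ can be matched by some $y \in C$, so that $\fP \mid (x-y)IC^{-1}$. Writing $(x-y)IC^{-1} = \fP J$ with $N(J) < N((x-y)IC^{-1})$, one peels off $\fP$ and recurses on $J$, and admissibility guarantees that no obstructing residue class ever blocks the reduction. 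Thus an ideal is captured by the stratification as soon as it can be routed through a chain of admissible primes of controlled norm.

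The crux---and the only place the quantitative hypothesis is used---is proving $\bigcup_\alpha \mathcal{A}_\alpha = \I$, that is, that admissible primes are plentiful enough to capture every ideal; this is the step I expect to be the main obstacle. The argument is a counting one: for an ideal of norm about $x$ one needs an admissible prime of norm at most $x$ lying in the class $[C]$ and avoiding the finitely many residue conditions that would block the peeling step, and a short inclusion--exclusion over those obstructing congruences converts the lower bound $\gg x/(\log x)^2$ for the number of admissible primes of norm up to $x$ into the existence of a usable $\fP$. The exponent $2$ is exactly what the method can afford: one power of $\log x$ is the generic density of prime ideals supplied by the prime ideal theorem, while the second power absorbs the loss incurred when sieving out the bad congruence classes in the covering step. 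Once the exhaustion is established, $\Psi(I) = \min\{\alpha : I \in \mathcal{A}_\alpha\}$ is a Euclidean algorithm for $C$, so $[C]$ is a Euclidean ideal class.
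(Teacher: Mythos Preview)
The paper does not supply its own proof of this statement: Theorem~\ref{G1} is quoted from Graves~\cite{growth} as a preliminary tool and is used as a black box (indeed, what the paper actually invokes in the main arguments is the variant Theorem~\ref{sgl} of Gun and Sivaraman). So there is nothing in the paper to compare your attempt against.

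That said, your sketch is in the right spirit---the Motzkin--Lenstra stratification $\mathcal{A}_\alpha$ is exactly the framework Graves uses---but the crucial counting step is left too vague to stand as a proof. The passage ``a short inclusion--exclusion over those obstructing congruences converts the lower bound $\gg x/(\log x)^2$ \ldots\ into the existence of a usable $\fP$'' hides the real work: one must show, by induction on the strata, a quantitative lower bound of the shape $|\{I \in \bigcup_{\beta\le n}\mathcal{A}_\beta : N(I)\le x\}| \gg x/\log x$ (or a suitable analogue), and this requires combining the admissible-prime count with a sieve-type estimate for the number of ideals with all prime factors admissible. Your description of ``peeling off $\fP$'' does not make clear why the growth rate $x/(\log x)^2$ is sharp for the induction to close, nor how the surjectivity $\cO_K^\times \twoheadrightarrow (\cO_K/\fP)^\times$ is actually deployed to move between strata (it is not about matching a residue of $x$ by some $y\in C$, but about controlling which cosets of $C$ in $I^{-1}C$ can be reached). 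If you want a self-contained argument, you should consult Graves~\cite{growth} directly for the precise inductive statement and the combinatorial lemma that drives it.
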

	
	Gun and Sivaraman proved the following generalized version of the above theorem.   
	\begin{theorem}[Gun and Sivaraman \cite{GS20}]\label{sgl}
		Suppose that $K$ is a number field, $ \rank (\cO_K ^{\times})\geq 1$  and $\Cl_K=\langle [\fa]\rangle$. If there exists an unbounded increasing sequence $\{X_n\}_{n\in \N}$ such that 
		\begin{align*}
			\Bigg|\Big\{\text{Prime ideal} \ \fp \subseteq \ \cO_K  \Big \vert[\fp]=[\fa],\Nm(\fp)\leq X_n, \cO_K ^{\times}\twoheadrightarrow (\cO_K/\fp) ^{\times}\Big\} \Bigg|\gg \frac{X_n}{\log^2X_n}, 
		\end{align*}
		then $[\fa]$ is a Euclidean ideal class.
	\end{theorem}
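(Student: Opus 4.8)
The plan is to adapt the proof of \Cref{G1} due to Graves \cite{growth}, isolating the single place where the hypothesis ``for all large $x$'' is invoked and showing that the weaker hypothesis ``along an unbounded sequence $\{x_n\}$'' already suffices. The backbone is Lenstra's Motzkin-type criterion for Euclidean ideals \cite{len}: one builds a filtration of the set $\I$ of nonzero integral ideals,
\[
\cA_0=\{\cO_K\}\subseteq \cA_1\subseteq \cA_2\subseteq\cdots,
\]
where $\cA_n$ is obtained from $\cA_{n-1}$ by adjoining every $I\in\I$ with the property that for each $x\in I^{-1}\fa\setminus\fa$ there is a $y\in\fa$ with $(x-y)I\fa^{-1}\in\cA_{n-1}$. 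Setting $\Psi(I)=\min\{n:I\in\cA_n\}$ produces a Euclidean algorithm for $\fa$ precisely when $\bigcup_n\cA_n=\I$; thus the entire theorem reduces to proving that every integral ideal eventually enters the filtration. Note that $\I$ is a countable set and each $I$ has a finite norm $\fN(I)$, so it suffices to show that any fixed ideal is caught at some finite stage.

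The set $S=\{\fp:[\fp]=[\fa],\ \cO_K^{\times}\twoheadrightarrow(\cO_K/\fp)^{\times}\}$ supplies the raw material for the inductive step. The surjectivity $\cO_K^{\times}\twoheadrightarrow(\cO_K/\fp)^{\times}$ guarantees that every residue class in $(\cO_K/\fp)^{\times}$ is represented by a global unit, which is exactly what licenses the reduction $x\mapsto x-y$ at a prime $\fp\in S$; and since $[\fp]=[\fa]$ generates $Cl_K$, finite products of primes from $S$ realize every ideal class. Following the Harper--Murty and Clark--Murty framework that Graves imports, one processes the primes of $S$ in order of increasing norm and shows, by a covering/sieve argument, that once sufficiently many admissible primes below a given threshold have been incorporated, every integral ideal of bounded norm has been absorbed into some $\cA_n$.

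The density hypothesis enters this covering argument only through the counting function $\pi_S(x)=|\{\fp\in S:\fN(\fp)\le x\}|$, and only at a discrete, strictly increasing set of thresholds $t_1<t_2<\cdots$ selected during the construction: at the $k$-th stage one must know that $\pi_S(t_k)\gg t_k/(\log t_k)^2$ in order to produce enough new admissible primes to enlarge the filtration and capture the next batch of ideals. Since the construction is free to choose each $t_k$ as large as convenient, we simply require each $t_k$ to be a term of the given sequence $\{x_n\}$; the unboundedness of $\{x_n\}$ ensures that at every stage an arbitrarily large valid threshold is available, and the hypothesis $\pi_S(x_n)\gg x_n/(\log x_n)^2$ furnishes exactly the lower bound needed there. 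Because any fixed ideal is caught once admissible primes up to a scale depending on its norm are available, and we can always push to a scale $x_n$ beyond any prescribed target, we conclude $\bigcup_n\cA_n=\I$, so $\Psi$ is a Euclidean algorithm for $\fa$ and $[\fa]$ is a Euclidean ideal class.

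The main obstacle is verifying that the covering argument genuinely consults $\pi_S$ only at the chosen thresholds, rather than secretly needing the lower bound at every intermediate $x$. This is a real point: along a sparse sequence the monotone function $\pi_S$ may be essentially flat between consecutive $x_n$, so one cannot recover the full density $\pi_S(x)\gg x/(\log x)^2$ from the sequence version, and a naive appeal to \Cref{G1} is unavailable. What rescues the argument is that each inductive step only asks for the \emph{existence} of a prescribed number of admissible primes below \emph{some} bound of our choosing, rather than a uniform lower bound at all scales; picking that bound to be the next available $x_n$ converts the sequence hypothesis into precisely the per-stage count required. Carefully re-running Graves's quantitative estimate with thresholds restricted to $\{x_n\}$, and checking that no step implicitly relies on intermediate values of $\pi_S$, is the technical heart of the proof.
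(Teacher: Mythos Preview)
The paper does not supply its own proof of this theorem: it is stated in the preliminaries as a result of Gun and Sivaraman and attributed to \cite{GS}, so there is no in-paper argument to compare against.

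Your outline is aligned with the strategy in \cite{GS}: one reruns Graves's proof of \Cref{G1} and observes that the lower bound on $\pi_S(x)$ is invoked only at thresholds that the argument is free to choose, so those thresholds may be taken from the given sequence $\{x_n\}$. That said, what you have written is a plan rather than a proof. You correctly flag the only delicate point---that the covering argument must not secretly use the lower bound at intermediate scales---but then explicitly defer it (``carefully re-running Graves's quantitative estimate \dots\ is the technical heart of the proof''). To close the gap you would need to exhibit the actual inequality from Graves's argument, namely the comparison between $\pi_S(x)$ and the count of primes in $S$ of norm at most $x$ that lie outside $\bigcup_n\cA_n$, and verify that evaluating this single inequality at one sufficiently large $x=x_n$ already forces a contradiction (or, inductively, that each stage requires only the existence of one large enough $x_n$). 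Until that step is written out, the proposal remains an accurate sketch of the Gun--Sivaraman approach rather than a complete proof.
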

	We state a crucial lemma that will be used in the proofs of the main theorems. 	
	\begin{lemma}[Heath-Brown \cite{HB}]\label{HBL}
		Suppose that $u$ and $v$ are natural numbers with the following properties,
		\[ (u,v)=1,\quad v\equiv 0 \pmod{16}, \quad  \text{and}  \quad  \left( \frac{u-1}{2},v\right)=1. \]
		Then there exist  $b, c \in (1/4,1/2)$ with $b<c$ such that for any $\epsilon>0$, 
		\begin{align*}
			\Bigg| & \Big\{ p \ \text{is a prime} ~  ~\Big \vert  ~ p\equiv u\pmod{v},~ p\in(X^{1-\epsilon},X)
			~	\text{such that} \ ~\frac{p-1}{2} \  ~\text{is either a} \\  &\text{prime or product of two primes}\ q_1q_2 \ with \ X^b<q_1<X^c\Big\}\Bigg|\gg \frac{X}{\log ^2X}.
		\end{align*}
	\end{lemma}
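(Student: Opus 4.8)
The plan is to recognise the statement as a weighted linear sieve problem: one produces the almost-prime values of $(p-1)/2$ by a Chen-type argument, with the level of distribution supplied by the Bombieri--Vinogradov theorem so that no hypothesis such as GRH is required. I would set
\[
    \cA = \left\{ \tfrac{p-1}{2} : p \text{ prime},\ X^{1-\epsilon} < p \le X,\ p \equiv u \pmod{v} \right\},
\]
and for squarefree $d$ write $\cA_d = \{ n \in \cA : d \mid n\}$. The hypotheses on $u$ and $v$ are precisely what makes this a genuine one-dimensional (linear) sieve: since $16 \mid v$ and $(u,v)=1$ the prime $p$ is odd, so $n=(p-1)/2$ is an integer, and the condition $\left(\tfrac{u-1}{2},v\right)=1$ forces $n$ to be coprime to $v$, so the sieving primes $\ell$ are coprime to $v$ and the associated density $\omega(\ell)=\ell/(\ell-1)$ is that of a dimension-one sieve.

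First I would record that $\cA_d$ corresponds to primes $p$ lying in a single residue class modulo $\lcm(2d,v)$: the condition $d \mid (p-1)/2$ together with $p\equiv u\pmod v$ pins $p$ down modulo $\lcm(2d,v)$. Writing $|\cA_d| = \tfrac{\omega(d)}{d}|\cA| + r_d$, the error terms $r_d$ are controlled on average, over odd $d$ coprime to $v$, by the Bombieri--Vinogradov theorem for primes in arithmetic progressions; this yields an admissible level of distribution $D = X^{1/2-\delta}$ for any fixed small $\delta>0$. For the bilinear sums arising in the switching step below I would instead invoke Iwaniec's well-factorable form of the linear sieve, which is compatible with Bombieri--Vinogradov and keeps the effective level at $X^{1/2-\delta}$ there as well.

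Next comes the weighted sieve. With sifting level $z_0 = X^{b}$ for a parameter $b$ just above $1/4$, every $n \in \cA$ counted by $S(\cA, z_0)$ has all prime factors exceeding $X^{b}$; since $n \asymp X$ this already forces $n$ to be a product of at most three primes. To keep only the products of at most two primes and to localise the smaller factor in $(X^b,X^c)$, I would form a Chen--Richert weighted combination of the shape
\[
    W(\cA) = S(\cA, z_0) - \tfrac12 \sum_{X^{b} \le q < X^{c}} S(\cA_q, z_0),
\]
whose subtracted sum is tailored to cancel the three-prime-factor contribution. Each inner quantity $S(\cA_q, z_0)$ is handled by Chen's switching principle, trading the sieving variable for the prime $q$ and reducing matters to counting primes $p\equiv u \pmod v$ with $p\equiv 1 \pmod{2q}$ subject to an almost-prime condition, again within reach of Bombieri--Vinogradov in well-factorable form. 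A lower bound $W(\cA) \gg X/\log^2 X$ then produces $\gg X/\log^2 X$ integers $n=(p-1)/2$ that are prime or of the form $q_1q_2$ with $X^b < q_1 < X^c$, which is exactly $|J_\epsilon(X)|$.

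The main obstacle is the quantitative balancing inside the weighted sieve: the parameters $b,c \in (1/4,1/2)$ must be chosen so that the Jurkat--Richert lower bound for $S(\cA, z_0)$ strictly dominates the upper bound for the subtracted sum, and at the natural threshold the level of distribution $X^{1/2}$ afforded by Bombieri--Vinogradov is only barely sufficient. This is exactly where Iwaniec's well-factorable linear sieve and the sharp constants in the sieve functions $F$ and $f$ are indispensable; the delicate points are verifying that the relevant sieve function stays positive throughout the range $q \in (X^b, X^c)$ and that the switching step preserves the two logarithmic savings needed for the final $X/\log^2 X$ lower bound.
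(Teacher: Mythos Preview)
The paper does not give a proof of this lemma: it is stated in the preliminaries and attributed to Heath-Brown, with a citation to \cite{HB}, and is then used as a black box throughout. So there is no ``paper's own proof'' to compare against.

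Your sketch is a faithful outline of how Heath-Brown actually proves the result in the cited reference. The key ingredients you identify are exactly the right ones: treating $\{(p-1)/2 : p\equiv u \pmod v,\ p\in(X^{1-\epsilon},X]\}$ as a linear sieve problem, using the hypotheses $16\mid v$ and $\bigl(\tfrac{u-1}{2},v\bigr)=1$ to guarantee that $(p-1)/2$ is an integer coprime to $v$, taking the level of distribution from Bombieri--Vinogradov, and then running a Chen-type weighted sieve with a switching step to eliminate the three-prime-factor configurations. Your remark that Iwaniec's well-factorable form of the linear sieve is what makes the numerics close is also correct; without it the constants from the Jurkat--Richert functions $F,f$ do not quite suffice at level $X^{1/2-\delta}$.

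If anything, your write-up is more of a proof plan than a proof: the genuine work lies in the explicit choice of $b$ and $c$ and in checking that the main term of $W(\cA)$ stays positive after the switching bound is inserted. Those computations are carried out in detail in \cite{HB}, and since the present paper only quotes the lemma, there is nothing further to reconcile.
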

	Now we end this section by stating a famous conjecture in Sieve Theory.
	\begin{conj}
		[Elliott-Halberstam conjecture \cite{EH}]  Let $a,q$ be natural numbers,  $\phi$ be the Euler totient function, $\pi(Y,q,a):=\Big\{ p\leq Y | \  p \ \text{is prime,} \, p\equiv a \mod{q} \Big\}$, and\\$\mathrm{li}(Y):=\int_{2}^Y \frac{1}{\log t}dt$. For every real number $\theta <1$ and for every positive integer $k>0$ and for all real numbers $X>2$, we have
		\begin{align*}
			\sum_{q\leq X^\theta} 	\mathop{max}_{Y\leq X} \mathop{max}_{(a,q)=1}\left|  \pi(Y,q,a)-\frac{\li(Y)}{\phi(q)}\right| \ll \frac{X}{\log^k X}.
		\end{align*}
		
	\end{conj}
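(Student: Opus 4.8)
This statement is the Elliott--Halberstam conjecture, a central open problem in analytic number theory; no unconditional proof is known, so the honest proof proposal is to describe the strongest available approach together with the barrier that halts it short of the full range $\theta<1$. The natural starting point is the Bombieri--Vinogradov theorem, which establishes exactly this inequality in the restricted range $\theta<1/2$, and the plan is to reproduce its proof and then attempt to enlarge the admissible exponent. First I would expand the error term $\pi(Y,q,a)-\mathrm{li}(Y)/\phi(q)$ using orthogonality of Dirichlet characters modulo $q$, isolating the principal character (whose contribution supplies the main term) and reducing matters to a character sum over the von Mangoldt function $\Lambda(n)$. Next, using a combinatorial identity of Vaughan or Heath--Brown, I would split $\Lambda(n)$ into type~I (linear) and type~II (bilinear) pieces; the contribution of the small moduli $q\le(\log X)^A$ is then controlled by the Siegel--Walfisz theorem, while the remaining moduli are attacked by the large sieve inequality and summed over $q\le X^{\theta}$.

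The decisive obstacle is that the large sieve is efficient only while $q$ ranges up to about $X^{1/2}$: once the number of moduli exceeds the length of the relevant Dirichlet polynomial, the orthogonality that drives Bombieri--Vinogradov no longer saves a power of $\log X$, and the resulting bound becomes lossy. Pushing past $\theta=\tfrac12$ therefore demands genuine cancellation in exponential sums attached to moduli of size nearly $X$, a regime in which no general estimate is available. The deepest partial progress --- due to Fouvry, to Bombieri, Friedl\"ander and Iwaniec, and to the circle of ideas surrounding Zhang's theorem --- exploits the dispersion method, Deligne's bounds for Kloosterman-type sums, and the spectral theory of automorphic forms, yet succeeds only for specially structured moduli (smooth or well-factorable $q$, or a single fixed residue class $a$) and yields $\theta=\tfrac12+\delta$ for small $\delta$ rather than the full conjecture.

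Consequently the hard part here is not an isolated lemma but the complete absence of any method delivering uniform cancellation for all moduli up to $X^{\theta}$ with $\theta$ arbitrarily close to $1$ and $a$ arbitrary; obtaining this is widely regarded as lying beyond the reach of current technology. For this reason the conjecture is stated but not proved, and the final section instead derives its consequences conditionally, under the assumption that the displayed bound holds.
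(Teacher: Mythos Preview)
Your assessment is correct: the statement is the Elliott--Halberstam conjecture, an open problem, and the paper makes no attempt to prove it either. It is introduced in the preliminaries purely as a hypothesis, and Section~6 derives conditional consequences (Theorems~\ref{EHC} and~\ref{RCE}) under its assumption, exactly as you anticipated in your final paragraph.
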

	
	\section{Euclidean ideals in  Real  Quartic extensions}\label{sec:quartic}
	Let $K$ be a  real quartic number field such that both $K$ and $ H(K)$ are abelian over $\Q$.   Let $f$ be the least common multiple of $16$ and  $ f(K)$. Also let  $G_{\ell}$ be  Galois group of  $\Q(\zeta_f)$ over $\Q(\zeta_{\ell})$.

	\begin{lemma}
		Suppose that the Galois group $G$ of $\Q(\zeta_f)$ over $K$ satisfies the hypothesis of Theorem \ref{thm41}. Then there exists an integer $a$  such that any prime $p$ with $p\equiv a \pmod{f}$    splits completely in $K$ but does not split completely in $H(K)$. Furthermore, there exist $b$ and $c$  in $(1/4,1/2)$ such that for any $X$ and  $\epsilon>0$, we have 
		\begin{align*}
			\Bigg| \Big\{ p \ ~\text{is prime}  ~\Big \vert ~ p\equiv a\pmod{f} , & ~ p\in(X^{1-\epsilon},X)
			~\text{such that} \ ~\frac{p-1}{2} ~\text{is either a }  ~\text{prime or product of} \\ &  \text{two primes}\  q_1q_2 \ with \ X^b<q_1<X^c\Big\}\Bigg|\gg \frac{X}{\log ^2X}.
		\end{align*}
		
	\end{lemma}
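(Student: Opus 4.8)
The plan is to extract from the hypothesis of \Cref{thm3} a single element $\sigma$ of $G=\Gal(\Q(\zeta_f)/K)$ whose associated residue class modulo $f$ simultaneously forces the required splitting behaviour and satisfies the arithmetic hypotheses of \Cref{HBL}. First I would use the assumed non-containment $G \not\subseteq \bigcup_l G_l \cup \Gal(\Q(\zeta_f)/H(K))$ to produce such a $\sigma$, so that $\sigma \notin G_l$ for every odd prime $l\mid f$ and for $l=4$, while $\sigma\notin \Gal(\Q(\zeta_f)/H(K))$. Under the canonical isomorphism $\Gal(\Q(\zeta_f)/\Q)\cong (\Z/f\Z)^{\times}$, I would then let $a\in\Z$ be a representative of the class corresponding to $\sigma$, necessarily with $(a,f)=1$.

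The key step is the dictionary between these group-theoretic exclusions and the coprimality conditions of \Cref{HBL}, applied with $u=a$ and $v=f$. Since $\sigma$ lies in a subgroup of $(\Z/f\Z)^{\times}$ we have $(a,f)=1$, and since $f=\lcm(16,f(K))$ we have $16\mid f$; in particular $a$ is odd, so $\tfrac{a-1}{2}\in\Z$. Writing $f=2^{k}m$ with $k\geq 4$ and $m$ odd, I would verify that $\bigl(\tfrac{a-1}{2},f\bigr)=1$ is precisely equivalent to the stated exclusions: the subgroup $G_l=\Gal(\Q(\zeta_f)/\Q(\zeta_l))$ consists of the residues $\equiv 1\pmod l$, so $\sigma\notin G_4$ forces $a\equiv 3\pmod 4$ and hence $\tfrac{a-1}{2}$ odd (coprimality to the $2$-part of $f$), while $\sigma\notin G_l$ for each odd prime $l\mid m$ forces $a\not\equiv 1\pmod l$, equivalently $l\nmid \tfrac{a-1}{2}$ (coprimality to the odd part). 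With all three hypotheses of \Cref{HBL} in hand, the claimed lower bound on $|J_\epsilon(X)|$ follows immediately.

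It remains to establish the splitting assertion, which is a direct consequence of the Frobenius correspondence in the abelian extension $\Q(\zeta_f)/\Q$. Since $(a,f)=1$, every prime $p\equiv a\pmod f$ is unramified in $\Q(\zeta_f)$ with $\mathrm{Frob}_p=\sigma$. Because $H(K)/\Q$ is abelian of conductor $f(K)\mid f$, we have $H(K)\subseteq \Q(\zeta_{f(K)})\subseteq \Q(\zeta_f)$, so the splitting type of $p$ in both $K$ and $H(K)$ is read off from the restriction of $\sigma$. As $\sigma\in\Gal(\Q(\zeta_f)/K)$ fixes $K$, the prime $p$ splits completely in $K$; as $\sigma\notin\Gal(\Q(\zeta_f)/H(K))$, its restriction to $H(K)$ is nontrivial and $p$ fails to split completely in $H(K)$.

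The main obstacle is the bookkeeping of this middle step: one must match the family of exclusions $\sigma\notin G_l$ against the single Heath-Brown condition $\bigl(\tfrac{a-1}{2},f\bigr)=1$, handling the $2$-adic contribution through the case $l=4$ and each odd prime factor of $f$ separately. Once this correspondence is set up, the counting estimate and the splitting statement are immediate consequences of \Cref{HBL} and the Frobenius formalism respectively.
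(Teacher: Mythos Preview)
Your proposal is correct and follows essentially the same approach as the paper: choose $\sigma$ in $G$ avoiding all the $G_l$ and $\Gal(\Q(\zeta_f)/H(K))$, let $a$ be its image in $(\Z/f\Z)^{\times}$, read off the splitting behaviour in $K$ and $H(K)$ from the Artin/Frobenius formalism, and then feed $u=a$, $v=f$ into \Cref{HBL}. Your write-up is in fact more explicit than the paper's in verifying that the exclusions $\sigma\notin G_l$ (for odd $l\mid f$ and $l=4$) are exactly what is needed to obtain $\bigl(\tfrac{a-1}{2},f\bigr)=1$, separating the $2$-adic and odd-prime contributions.
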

	\begin{proof}
		According to the  hypothesis  of Theorem \ref{thm41}, we have 
		\begin{align*}
			G \not \subseteq \bigcup _l G_{\ell}\bigcup \Gal\left({\Q(\zeta_f)}/{H(K)}\right).
		\end{align*} Let $\sigma \in  G \setminus \left(\bigcup_l G_{\ell}\bigcup\Gal\left({\Q(\zeta_f)}/{H(K)}\right) \right)$.  
		Choose $a \pmod{f}$ to be the image of  $\sigma$  under the isomorphism $\Gal (\Q(\zeta_f)/\Q) \to \left(\Z/f\Z\right) ^{\times}$. By the properties of  the Artin symbol, whenever $p\equiv a \pmod{f} $ then 
		\begin{align*}
			\left(\frac{K/\Q}{p}\right)=\sigma|_K=id \ \text{ and}\  \left(\frac{H(K)/\Q}{p}\right)=\sigma|_{H(K)}\neq id.	
		\end{align*}
		Therefore $p$ splits completely in $K$ but does not split completely in $H(K)$. The fact that  $\sigma\not \in\bigcup G_{\ell} $ for all prime $\ell$ dividing $f$  translates to $ \left(\frac{a-1}{2},f\right)=1$.
		By applying  Lemma \ref{HBL}  for $u=a$ and $v=f$, there exist $b,c \in \left(1/4,1/2\right)$ and for every $\epsilon\geq 0$,
		we get 
		\begin{align*}
			\Bigg|\Big\{~ p \ ~\text{is prime} ~\Big \vert ~ &  p\equiv a\pmod{f} , ~p\in(X^{1-\epsilon},X)
			~\text{such that} \ ~\frac{p-1}{2} ~\text{is either a } \\  &\text{prime or product of two primes}\ q_1q_2 \ with \ X^b<q_1<X^c\Big\}\Bigg|\gg \frac{X}{\log ^2X}.
		\end{align*}
	\end{proof}
	For the number field $K$ in Theorem \ref{thm41}, we define 
	\begin{align*}
		M_{\epsilon}:=&\Big\{\fp\subseteq \cO_K  \ \text{is a prime ideal} \  ~\Big \vert\ \Nm(\fp)=p\ \text{ is  a prime,} \ ~ p\equiv a\pmod{f},\  \frac{p-1}{2} \ \text{is}\\  & \text{ either  a prime or product of two primes}\ q_1q_2 \ with \ p^b<q_1<p^{\frac{c}{1-\epsilon}}   \Big\},\quad  \text{and}
	\end{align*} 
	$$M_{\epsilon}(X)=\{\fp\in M_{\epsilon} ~\Big \vert\Nm(\fp)\leq X \}. $$
	
	\begin{lemma}\label{lm2}
		Let $K$ be a totally real number field, and let $e_1, e_2$, and $e_3$ be non-zero elements of $K$ that are multiplicatively independent. In this case, for some $i \in \{1,2,3\}$,  either $e_i$ or $-e_i$ serves as a primitive root modulo $\fp$ for infinitely many prime ideals in the set $M_{\epsilon}$.
		Let us denote this set of prime ideals as $P$, and let $P(X)$ represent the elements in $P$ whose norm is less than or equal to $X$. 
		Then, there exists an increasing and unbounded sequence $\{x_n\}_{n\in \mathbb{N}}$ such that
		
		$$P(x_n)\gg \frac{x_n}{\log^2x_n}. $$
	\end{lemma} 
	\begin{proof}
		We refer the reader to   \cite[Lemma 16]{GS20}.
	\end{proof}
	Consider a real quartic field $K$ where $\mathcal{O}_K^{\times}$ contains three multiplicatively independent elements, denoted as $\epsilon_1, \epsilon_2, \epsilon_3$.   By Lemma \ref{lm2}, there exists an element $\eta$ in the set $\{\pm \epsilon_1, \pm \epsilon_2, \pm \epsilon_3\}$ and a corresponding set of prime ideals $P$. For every prime ideal $\mathfrak{p}$ in $P$, $\eta$ acts as a primitive root modulo $\mathfrak{p}$.
	Furthermore, we get an increasing and unbounded sequence $\{x_n\}_{n\in\mathbb{N}}$ such that
	\begin{align*}
		\biggr | P(x_n)\biggr| \geq \frac{cx_n}{\log^2x_n}.
	\end{align*}
	Since there are only finitely many ideal classes in the class group $\Cl_K$ of $K$, we can assert that there exists a non-trivial ideal class $[\mathfrak{a}]$  and  a subsequence $\{y_n\}_{n\in\mathbb{N}}$  of $\{x_n\}_{n\in\mathbb{N}}$ such that
	\begin{align*}
		\Bigg| \Big\{ \fp \in [\fa]~  \Big \vert~\fp\in P,~\Nm(\fp)\leq y_n \Big\}\Bigg|\gg \frac{y_n}{h_k\log^2y_n}.
	\end{align*}
	Otherwise, we would have  \[\biggr | P(x_n)\biggr| =o\left( \frac{x_n}{\log^2x_n}\right).\]
	
	\noindent
	Since every prime ideal $\mathfrak{p} \in M_{\epsilon}$  lies over a prime $p$ congruent to $a$ modulo $f$, it follows that $\mathfrak{p}$ does not belong to the trivial ideal class. Given that  $\Cl_K$ is a cyclic group of prime order,  $[\mathfrak{a}]$  must be a generator of 
	$\Cl_K$. According to Theorem \ref{sgl}, we can conclude that $[\mathfrak{a}]$ is the Euclidean ideal class.
	This completes the proof of  \Cref{thm41}$.\square$

	\section{Euclidean ideals in biquadratic fields}\label{sec:biquadratic}
	
	\begin{proposition}\label{Hilbertfield}
		Let $K=\Q\left(\sqrt{q},\sqrt{rs}\right) $, where  $q,r$, and $s$ are odd primes. 
        
        If one of the following conditions is satisfied:  
\begin{enumerate}
    \item $q \equiv 3 \pmod{4}$
    \item At least one of $r$ or $s$ is $1 \pmod{4}$,
\end{enumerate}
        then  \begin{align*}
			\Q\left(\sqrt{q},\sqrt{r},\sqrt{s} \right) \subseteq H(K).
		\end{align*}
	\end{proposition}
	\begin{proof}
		Let $L$ be the number field $ \mathbb{Q}(\sqrt{q}, \sqrt{r}, \sqrt{s})$. It can be observed that $L$ is an abelian extension over $K = \mathbb{Q}(\sqrt{q}, \sqrt{rs})$.  It is enough to prove that all primes of $K$ are unramified in $L$. To demonstrate that all primes of $ K$  are unramified in the number field $L$, it is enough to examine the primes that lie above $q, r, s$, and $2$. Furthermore, it can be observed that 
		\begin{align*}
		L=K\Q(\sqrt{qr})=K\Q(\sqrt{sq})=K\Q(\sqrt{r})=K\Q(\sqrt{s}).
		\end{align*} 
		Therefore,  by  \cite[Lemma 2.4.]{sp3},   we have  $L\subseteq H(K)$,
		where $H(K)$ denotes the Hilbert class field of $K$.
	\end{proof}
	By the Lemma~\ref{bl1} and  Lemma~\ref{bl2}, we know that $f(K) = 4qrs$. Let us define $f$ as the least common multiple of $f(K)$ and $16$. In other words, we have $f = \operatorname{lcm}(f(K), 16) = 16qrs$.
	
	\begin{lemma}\label{bl3}
		Let $q,r,s$ be prime numbers satisfying one of the following conditions:
\begin{enumerate}
    \item $q \equiv 3 \pmod{4}$,
    \item At least one of $r$ or $s$ is $1 \pmod{4}$.
\end{enumerate}  Suppose that the number field $K=\Q(\sqrt{q},\sqrt{rs})$   has class number two.    If  $\Gal(H(K)/K)=\langle \sigma \rangle$,  then there exists a lift  $\hat \sigma \in 	\Gal\left(\Q(\zeta_f)/K  \right)$   of $\sigma$   such that 
		\begin{align*}
			\hat \sigma \not \in \bigcup _{\ell} \Gal\left(\Q(\zeta_f)/\Q(\zeta_{\ell}) \right)\bigcup \Gal\left({\Q(\zeta_f)}/{H(K)}\right)
		\end{align*}
	where $\ell$ runs over the odd prime divisors of $f$  and  $\ell=4$. 
	\end{lemma}
	\begin{proof}
		
		By  Proposition \ref{Hilbertfield}, the Hilbert class field $H(K)$ is  given by $\mathbb{Q}\left(\sqrt{q}, \sqrt{r}, \sqrt{s}\right)$. Moreover, $\operatorname{Gal}(H(K)/K) = \langle \sigma \rangle$, where 
        \[ \sigma(\sqrt{r}) = -\sqrt{r}, \sigma(\sqrt{s}) = -\sqrt{s}, ~\text{and} \ \sigma(\sqrt{q}) = \sqrt{q}.\]
		
		\subsection*{Case (i): Both $r$ and $s$ are congruent to $1 \pmod{4}$}  
Let $\tau$ be any lift of $\sigma$ to $\mathbb{Q}(\zeta_f)$ then  $\tau(\sqrt{r}) = -\sqrt{r}$ and $\tau(\sqrt{s}) = -\sqrt{s}$. Therefore, $\tau$ does not belong to $\operatorname{Gal}(\mathbb{Q}(\zeta_f)/\mathbb{Q}(\zeta_{\ell}))$ for $\ell = r, s$.

{\bf Subcase (i) $q \equiv 1 \pmod{4}$}:  
Consider the field $F$, to be  the compositum of $H(K)$ and $\mathbb{Q}(\zeta_q)$. In this case, the intersection $H(K) \cap \mathbb{Q}(\zeta_q)$ equals $\mathbb{Q}(\sqrt{q})$. Therefore, by Lemma~\ref{comp}, we have
\[
\Gal(F/\mathbb{Q}(\sqrt{q})) \cong \Gal(H(K)/\mathbb{Q}(\sqrt{q})) \times \Gal(\mathbb{Q}(\zeta_q)/\mathbb{Q}(\sqrt{q})).
\]
As a result, there exists an automorphism $\sigma_1 \in \Gal(F/K)$ lifting $\sigma$ such that $\sigma_1(\zeta_q) \neq \zeta_q$.
Next, observe that $F \cap \mathbb{Q}(i) = \mathbb{Q}$. This allows us to further lift $\sigma_1$ to an automorphism $\sigma_2 \in \Gal(F(i)/K)$ satisfying $\sigma_2(i) = -i$.
Finally, this construction yields a lift $\hat{\sigma} \in \Gal(\mathbb{Q}(\zeta_f)/K)$ of $\sigma$ such that
\[
\hat{\sigma} \notin \Gal(\mathbb{Q}(\zeta_f)/\mathbb{Q}(\zeta_{\ell}))
\]
for $\ell = 4, q, r$, and  $s$.\\
{\bf Subcase (ii): $q \equiv 3 \pmod{4}$}  
In this case, let $F$ denote the compositum of $H(K)$ and $\mathbb{Q}(i)$. Since $H(K) \cap \mathbb{Q}(i) = \mathbb{Q}$, there exists an automorphism $\sigma_1 \in \Gal(F/\mathbb{Q})$ such that $\sigma_1(i) = -i$ and $\sigma_1|_{H(K)} = \sigma$.
Therefore, we have $\sigma_1(i\sqrt{q}) = -i\sqrt{q}$. Consequently, any lift of $\sigma_1$ does not belong to $\Gal(\mathbb{Q}(\zeta_f)/\mathbb{Q}(\zeta_{\ell}))$ for $\ell = 4, q, r$, and $ s$.

        \subsection*{Case (ii): Both $r$ and $s$ are congruent to $3 \pmod{4}$}  
In this case, we may assume that $q \equiv 3 \pmod{4}$. Let $E$ denote the compositum of the fields $\mathbb{Q}(i)$, $H(K)$, and $\mathbb{Q}(\zeta_q)$. 
By an argument analogous to that in Case (i), there exists an automorphism $\sigma_1 \in \Gal(E/K)$ such that $\sigma_1(i) \neq i$, $\sigma_1(\zeta_q) \neq \zeta_q$, and $\sigma_1|_{H(K)} = \sigma$.
Next, consider $E_1$, the compositum of $E$ and $\mathbb{Q}(\zeta_r)$. Since $E \cap \mathbb{Q}(\zeta_r) = \mathbb{Q}(i\sqrt{r})$, Lemma~\ref{comp} implies
\[
\Gal(E_1/\mathbb{Q}(i\sqrt{r})) \cong \Gal(E/\mathbb{Q}(i\sqrt{r})) \times \Gal(\mathbb{Q}(\zeta_r)/\mathbb{Q}(i\sqrt{r})).
\]
Therefore, there exists a lift $\sigma_2 \in \Gal(E_1/K)$ of $\sigma_1$.
Now, let $E_2$ be the compositum of $E_1$ and $\mathbb{Q}(\zeta_s)$. Since $E_2 \cap \mathbb{Q}(\zeta_s) = \mathbb{Q}(i\sqrt{s})$, Lemma~\ref{comp} gives
\[
\Gal(E_2/\mathbb{Q}(i\sqrt{s})) \cong \Gal(E_1/\mathbb{Q}(i\sqrt{s})) \times \Gal(\mathbb{Q}(\zeta_s)/\mathbb{Q}(i\sqrt{s})).
\]
Thus, there exists a lift $\sigma_3 \in \Gal(E_2/K)$ of $\sigma_2$ such that $\sigma_3(\zeta_s) \neq \zeta_s$.
In conclusion, this process yields a lift $\hat{\sigma} \in \Gal(\mathbb{Q}(\zeta_f)/K)$ of $\sigma$ with the property that
\[
\hat{\sigma} \notin \Gal(\mathbb{Q}(\zeta_f)/\mathbb{Q}(\zeta_{\ell}))
\]
for $\ell = 4, q, r$, and  $s$.

        \subsection*{Case (iii): Exactly one of $r$ or $s$ is congruent to $1$ modulo $4$}  
Without loss of generality, we may assume that $r \equiv 1 \pmod{4}$ and $s \equiv 3 \pmod{4}$. 
Since $\sigma(\sqrt{r}) = -\sqrt{r}$ and $\sqrt{r} \in \mathbb{Q}(\zeta_r)$, it follows that, for any lift $\tau$ of $\sigma$, we must have $\tau(\zeta_r) \neq \zeta_r$.
Now, let $L$ denote the compositum of $H(K), \mathbb{Q}(i)$, and $\mathbb{Q}(\zeta_q)$. By an argument similar to that in Case (i), there exists an automorphism $\tau_1 \in \Gal(L/K)$ such that $\tau_1|_{H(K)} = \sigma$ and $\tau_1 \notin \Gal(L/\mathbb{Q}(\zeta_{\ell}))$ for $\ell = 4, q$.
Next, consider $L_1$, the compositum of $L$ and $\mathbb{Q}(\zeta_s)$. Since $L_1 \cap \mathbb{Q}(\zeta_s) = \mathbb{Q}(i\sqrt{s})$, Lemma~\ref{comp} provides the isomorphism
\[
\Gal(L_1/\mathbb{Q}(i\sqrt{s})) \cong \Gal(L/\mathbb{Q}(i\sqrt{s})) \times \Gal(\mathbb{Q}(\zeta_s)/\mathbb{Q}(i\sqrt{s})).
\]
Consequently, there exists a lift $\tau_2 \in \Gal(L_1/K)$ of $\tau_1$ such that $\tau_2(\zeta_s) \neq \zeta_s$.
Finally, let $\hat{\sigma}$ be a lift of $\tau_2$ to $\mathbb{Q}(\zeta_f)$. By construction, we have
\[
\hat{\sigma} \notin \Gal\big( \mathbb{Q}(\zeta_f)/\mathbb{Q}(\zeta_{\ell}) \big)
\]
for each $\ell=4,q,r$, and $s$.
\end{proof}	

	\subsection{ Proof of  \Cref{corq}}
\begin{proof}
Let $q$, $r$, and $s$ be distinct primes satisfying the conditions of \Cref{corq}. Suppose $K$ denotes the biquadratic field $K = \mathbb{Q}\left(\sqrt{q}, \sqrt{rs}\right)$. By Theorem~\ref{Hilbertfield}, the Hilbert class field of $K$ is given by
\[
H(K) = \mathbb{Q}\left( \sqrt{q}, \sqrt{r}, \sqrt{s} \right).
\]
Now, by Lemma~\ref{bl3}, we have
\[
\Gal\left( \mathbb{Q}(\zeta_f) / K \right) \not\subseteq 
\bigcup_{\substack{\ell \mid f \\ \ell\ \text{prime}}} \Gal\left( \mathbb{Q}(\zeta_f) / \mathbb{Q}(\zeta_{\ell}) \right) 
\cup \Gal\left( \mathbb{Q}(\zeta_f) / H(K) \right).
\]
Therefore, by Theorem~\ref{thm41}, the biquadratic field $K$ possesses a Euclidean ideal class.
\end{proof}

\subsection{Proof of \Cref{corprime2}}

\begin{proof}
Let $K_2$ be the biquadratic field of the form $K_2 = \mathbb{Q}(\sqrt{2}, \sqrt{qr})$. Without loss of generality, we may assume that $q \equiv 1 \pmod{4}$. 
By arguments analogous to those in \Cref{Hilbertfield}, it follows that the Hilbert class field of $K_2$ is given by
\[
H(K_2) = \mathbb{Q}(\sqrt{2}, \sqrt{q}, \sqrt{r}).
\]
Moreover, by Lemmas~\ref{bl1} and~\ref{bl2}, the conductor of $K_2$ is $f(K_2) = 8qr$. 
Applying Lemma~\ref{bl3}, we obtain
\[
\Gal\left( \mathbb{Q}(\zeta_{f(K_2)}) / K_2 \right) 
\not\subseteq 
\bigcup_{\substack{ \ell \mid f(K_2) \\ \ell\ \text{prime}}} 
\Gal\left( \mathbb{Q}(\zeta_{f(K_2)}) / \mathbb{Q}(\zeta_{\ell}) \right) 
\cup 
\Gal\left( \mathbb{Q}(\zeta_{f(K_2)}) / H(K_2) \right).
\]
Therefore, by Theorem~\ref{thm41}, the biquadratic field $K_2$ possesses a Euclidean ideal class.
\end{proof}

\subsection{Proof of \Cref{cor32}}

\begin{proof}
Let $K_1 = \mathbb{Q}(\sqrt{q}), K_2 = \mathbb{Q}(\sqrt{r})$, and $K_3 = \mathbb{Q}(\sqrt{rq})$. By \cite[Theorem 3.1]{SP95}, the class number $h_K$ of the biquadratic field $K$ satisfies
\[
h_K = \frac{\left[ \mathcal{O}_K^\times : \mathcal{O}_{K_1}^\times \mathcal{O}_{K_2}^\times \mathcal{O}_{K_3}^\times \right]}{4} \cdot h_{K_1} h_{K_2} h_{K_3}.
\]
Since all quadratic subfields of $K$ have deficiency zero, each possesses a fundamental unit with negative norm. By \cite[Proposition 1]{dumithershy23}, the unit group of $K$ is either
\[
\{ -1, \epsilon_1, \epsilon_2, \epsilon_3 \}
\quad \text{or} \quad
\{ -1, \epsilon_1, \epsilon_2, \sqrt{\epsilon_1 \epsilon_2 \epsilon_3} \},
\]
where $\epsilon_i$ denotes the fundamental unit of $K_i$ for $i = 1, 2, 3$. 
Furthermore, as $K$ has deficiency $1$, By  \cite[Proposition 7]{dumithershy23}, we have
$\mathcal{O}_K^\times = \langle -1, \epsilon_1, \epsilon_2, \epsilon_3 \rangle$.
It follows that
$
\left[ \mathcal{O}_K^\times : \mathcal{O}_{K_1}^\times \mathcal{O}_{K_2}^\times \mathcal{O}_{K_3}^\times \right] = 2$.
Therefore, the class number of $K$ is equal to $p$.
To apply Theorem~\ref{thm41}, it remains to show that
\[
\Gal\left( \mathbb{Q}(\zeta_f) / K \right)
\not\subseteq
\bigcup_{\substack{ \ell \mid f \\ \ell\ \text{odd prime or } \ell = 4}} 
\Gal\left( \mathbb{Q}(\zeta_f) / \mathbb{Q}(\zeta_{\ell}) \right)
\cup
\Gal\left( \mathbb{Q}(\zeta_f) / H(K) \right).
\]
Since the class number of $K$ is prime, the Galois group $\Gal(H(K)/K)$ is cyclic. Let $\sigma$ denote a generator of $\Gal(H(K)/K)$. Then any lift $\overline{\sigma}$ of $\sigma$ to $\mathbb{Q}(\zeta_f)$ does not lie in $\Gal(\mathbb{Q}(\zeta_f)/H(K))$.
For any prime $\ell$ dividing $f$, if $\mathbb{Q}(\zeta_{\ell}) \cap H(K) \neq \mathbb{Q}$, there exists a lift of $\sigma$ to $\mathbb{Q}(\zeta_f)$ that does not fix $\zeta_{\ell}$. On the other hand, if $\mathbb{Q}(\zeta_{\ell}) \cap H(K) = \mathbb{Q}$, then by Lemma~\ref{comp}, we can always find a lift of $\sigma$ that acts nontrivially on $\zeta_{\ell}$. This establishes the required condition, and the result follows.
\end{proof}

	\section{Euclidean ideals in  real cubic and quadratic fields}\label{sec:cubic-quadratic}

	Gun and Sivaraman \cite{GS20} also examined the existence of a Euclidean ideal in the case of abelian number fields $K$ with a prime class number and  $ \text{rank}(\cO_K ^{\times}) \leq 2$.  They proved the following two theorems.
	\begin{theorem}[Gun and Sivaraman \cite{GS20}] ] \label{thm:GScubic}
		Let $K_1, K_2$ be distinct real cubic fields with prime class number having conductors $f_1,f_2$ respectively. And also let $H(K_i)$ be abelian over $\Q$  for $i=1,2$ and $f =lcm (f_1,f_2,16)$.    If 
		\begin{align*}
			\Gal\left(\Q(\zeta_f)/K_1K_2\right)\not \subseteq  \bigcup_ {\ell\mid f}  Gal\left({\Q(\zeta_f)}/{\Q(\zeta_{\ell})} \right) \bigcup_{i=1}^2 \Gal\left(\Q(\zeta_f)/ H(K_i)\right),
		\end{align*}
		%	{\substack{\ell\mid f\\ \ell \  \text{ is  odd prime  } \\  \text {or}\  l=4}} 
		where $\ell$ is an odd prime or $\ell=4 $,  then at least  one of  $K_1,K_2$ has a Euclidean ideal class.
	\end{theorem}

	\begin{theorem}[Gun and Sivaraman \cite{GS20}] \label{thm:GSquadratic}
		Let $K_1, K_2, K_3$ be distinct real quadratic fields with prime class number having conductors $f_1,f_2,f_3$ respectively. And also let $H(K_i)$ be abelian over $\Q$  for $i=1,2,3$ and 
		$f =lcm (f_1,f_2,f_3,16)$.    If 
		\begin{align*}
			\Gal\left(\Q(\zeta_f)/K_1K_2K_3\right)\not \subseteq  \bigcup_ {\ell\mid f}  Gal\left(\Q(\zeta_f)/\Q(\zeta_{\ell}) \right) \bigcup_{i=1}^3 \Gal\left(\Q(\zeta_f)/ H(K_i)\right),
		\end{align*}
		%	{\substack{\ell\mid f\\ \elll \  \text{ is  odd prime  } \\  \text {or}\  l=4}} 
		where $\ell$ is an odd prime  or $\ell=4 $, then at least one of  $K_1,K_2,K_3$ has  a Euclidean ideal class.
	\end{theorem}
	During our investigation of Euclidean ideals in both quadratic and cubic extensions, our primary objective was to eliminate the assumption of a prime class number in Theorems \ref{thm:GScubic} and \ref{thm:GSquadratic}. During our study, we made an intriguing observation. Although the following proposition may be well-known, we have included the proof for the sake of completeness, thus avoiding the need for a specific reference.

	\begin{proposition}\label{cyclic:classgroup}
    Let $K$ be a real abelian extension of $\mathbb{Q}$ of prime degree $p$, and suppose that its Hilbert class field $H(K)$ is also abelian over $\mathbb{Q}$. If the class group $\Cl_K$ is cyclic, then the class number of $K$ is either $1$ or $p$.
\end{proposition}

\begin{proof}
We establish this result using the genus theory of number fields, referring to Ishida’s book \cite{genustheorybook} as a key reference. Consider first the quadratic case, where $K = \mathbb{Q}(\sqrt{d})$. Let the prime factorization of the Discriminant of $K$ 
\[
D_K = p_1 p_2 \cdots p_t p_{t+1} \cdots p_{t+r},
\]
where $p_i \equiv 1 \pmod{4}$ for $1 \leq i \leq t$ and $p_{t+i} \equiv 3 \pmod{4}$ for $1 \leq i \leq r$.
Since $H(K)$ is abelian over $\Q$, it implies that H(K) is the same as the Genus field. By \cite[Chapter 4]{genustheorybook}, considering the ramification at the infinite prime for real quadratic fields, we observe that the structure of the Hilbert class field $H(K)$  can be shown as follows:
\[
H(K) =
\begin{cases}
    \mathbb{Q}(\sqrt{p_1}, \dots, \sqrt{p_t}, \sqrt{p_{t+1} p_{t+2}}, \dots, \sqrt{p_{t+1} p_{t+r}}) & \text{if $K$ is a real quadratic field,} \\
    \mathbb{Q}(\sqrt{p_1}, \dots, \sqrt{p_{t+r}}) & \text{if $K$ is an imaginary quadratic field.}
\end{cases}
\]
Since $H(K)$ is cyclic over $K$, we must have $[H(K):K] = 1$ or $p$.

By \cite[Chapters 4 and 5]{genustheorybook}, the genus field of $K$ is constructed as a composite of disjoint degree $p$ extensions of $\mathbb{Q}$ along with $K$. This implies that if $\Cl_K$ is cyclic, then its order must be either $1$ or $p$.
\end{proof}

	We derive the following corollaries from the proposition presented above.
	\begin{corollary}
		Consider two distinct real cubic fields
		$K_1$ and $K_2$ that have cyclic class groups.  Also let both fields, $K_1$ and $K_2$ be abelian extensions over  $\Q$.  Furthermore,  define  $f$ to be the least common multiple of $f(K_1),f(K_2)$, and $16$. If 
		\begin{align*}
			\Gal\left(\Q(\zeta_f)/K_1K_2\right)\not \subseteq  \bigcup_ {\ell\mid f}  Gal\left({\Q(\zeta_f)}/{\Q(\zeta_{\ell})} \right) \bigcup_{i=1}^2 \Gal\left(\Q(\zeta_f)/ H(K_i)\right),
		\end{align*}
		%	{\substack{\ell\mid f\\ \ell \  \text{ is  odd prime  } \\  \text {or}\  l=4}} 
		where $\ell$ is an odd prime or $\ell=4 $,  then at least  one of  $K_1,K_2$ has a Euclidean ideal class.
	\end{corollary}
	\begin{proof}
		By 	\Cref{cyclic:classgroup},   $K_1$ and $K_2$  have class number $1$ or $3$.  The result follows from  \Cref{thm:GScubic} and \cite{NW07}.
	\end{proof}

	\begin{corollary}
		Let $K_1$, $K_2$, and $K_3$ be three distinct real quadratic fields with cyclic class groups. Also let   $ H(K_1), H(K_2)$, and $ H(K_3)$, are  abelian extensions over $\Q$. Consider  $f$  to be  the least common multiple  of $f(K_1),f(K_2),f(K_3)$, and $16$.  If 
		\begin{align*}
			\Gal\left(\Q(\zeta_f)/K_1K_2K_3\right)\not \subseteq  \bigcup_ {\ell\mid f}  Gal\left(\Q(\zeta_f)/\Q(\zeta_{\ell}) \right) \bigcup_{i=1}^3 \Gal\left(\Q(\zeta_f)/ H(K_i)\right),
		\end{align*}
		%	{\substack{\ell\mid f\\ \ell \  \text{ is  odd prime  } \\  \text {or}\  l=4}} 
		where $\ell$ is an odd prime  or $\ell=4 $, then at least one of  $K_1,K_2,K_3$ has  a Euclidean ideal class.
	\end{corollary}
	\begin{proof}
		By 	\Cref{cyclic:classgroup},   $K_1$ , $K_2$, and $K_3$  have class number $1$ or $2$.  Now the result follows from  \Cref{thm:GSquadratic} and \cite{NW07}.
	\end{proof}

	\section{Consequence of the Elliott-Halberstam Conjecture.}\label{sec:conj}
	
	Let $K$ be an abelian real cubic extension such that  $H(K)$ is also abelian over $\Q$. 
	Let  $f$ be the least common multiple of $16$ and  $f(K) $. And let $G_{\ell}$ be  the Galois group of  $\Q(\zeta_f)$ over $\Q(\zeta_{\ell})$ for all odd prime $\ell$  dividing $f$ or $\ell=4$. 
	
	\begin{theorem}\label{RCE}
		Let $K$ be an abelian real cubic extension with cyclic class group. Equivalently, the class number of $K$ is either $1$ or $3$.
		Also let  $\Gal(H(K)/K)=\langle \sigma\rangle$ and  there exists an extension $\hat{\sigma} $   of  $\sigma $ to $\Q(\zeta_f) $,  such that $\hat{\sigma} \not\in G_{\ell}$ for any odd prime $\ell$ dividing $f$ and $\ell=4$.
		Then  $K$ has a Euclidean ideal class if the Elliott-Halberstam Conjecture is true.
	\end{theorem}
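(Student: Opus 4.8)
The plan is to mirror the argument for \Cref{EHC}, exploiting the fact that a real cubic field already carries unit rank two, so that the single field $K$ can play the role previously played by the biquadratic compositum $K_1K_2$. First I would take the lift $\hat\sigma\in\Gal(\Q(\zeta_f)/K)$ of $\sigma$ provided by the hypothesis and let $a\in(\Z/f\Z)^\times$ be its image under the isomorphism $\Gal(\Q(\zeta_f)/\Q)\cong(\Z/f\Z)^\times$, so that $\sigma_a=\hat\sigma$. The condition $\hat\sigma\notin G_l$ for every odd prime $l\mid f$ and for $l=4$ says precisely that $a\not\equiv1\pmod{l}$ for each odd $l\mid f$ and that $a\equiv3\pmod{4}$; since $16\mid f$ this is equivalent to $\left(\frac{a-1}{2},f\right)=1$. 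Thus with $u=a$ and $v=f$ the three hypotheses of \Cref{HBL} hold, and there exist $b,c\in(1/4,1/2)$ such that
\begin{align*}
\bigl|J_\epsilon(X)\bigr|\gg\frac{X}{\log^2X}.
\end{align*}

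Next I would extract the class-group consequence from the Artin symbol. Because $\hat\sigma$ restricts to the identity on $K$, every prime $p\equiv a\pmod{f}$ splits completely in $K$, so each prime $\fp$ of $K$ lying above such a $p$ has $\fN(\fp)=p$; and since $\hat\sigma|_{H(K)}=\sigma$ generates $\Gal(H(K)/K)\cong Cl_K$, the symbol $\left(\frac{H(K)/K}{\fp}\right)$ generates the cyclic group $Cl_K$, that is, $[\fp]$ is a generator. I would then collect these primes into the set $M_\epsilon\subseteq\cO_K$ defined exactly as in the earlier sections, consisting of the primes $\fp$ with $\fN(\fp)=p\equiv a\pmod{f}$ and $\frac{p-1}{2}$ either prime or a product $q_1q_2$ with $p^b<q_1<p^{c/(1-\epsilon)}$, so that every $\fp\in M_\epsilon$ generates $Cl_K$.

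The heart of the argument, and the only place the conjecture enters, is the primitive-root input. Since $K$ is real cubic we have $\rank(\cO_K^\times)=2$, so I may choose multiplicatively independent units $e_1,e_2\in\cO_K^\times$. Assuming the Elliott and Halberstam conjecture, \cite[Lemma 19]{GS} applied to the pair $e_1,e_2$ yields a subset $P\subseteq M_\epsilon$ and an unbounded increasing sequence $\{x_n\}_{n\in\N}$ such that for each $\fp\in P$ some $\eta\in\{\pm e_1,\pm e_2\}$ is a primitive root modulo $\fp$, and
\begin{align*}
\bigl|P(x_n)\bigr|\gg\frac{x_n}{\log x_n}.
\end{align*}
This is exactly what allows two independent units to suffice, so that one cubic field does the work that required a pair of fields in \Cref{thm1}. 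As every $\fp\in P\subseteq M_\epsilon$ generates $Cl_K$, these primes lie among the $\phi(h_K)$ classes of order $h_K$; by finiteness of $Cl_K$ a pigeonhole argument then produces a single generating class $[\fa]$ and a subsequence $\{y_n\}$ with
\begin{align*}
\bigl|\{\fp\in[\fa]:\fp\in P,\ \fN(\fp)\leq y_n\}\bigr|\gg\frac{y_n}{\log y_n}.
\end{align*}

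Finally, for each such $\fp$ the existence of a primitive root $\eta\in\cO_K^\times$ modulo $\fp$ forces the surjection $\cO_K^\times\twoheadrightarrow(\cO_K/\fp)^\times$, so the last display feeds directly into \Cref{sgl}, the bound $\gg y_n/\log y_n$ being stronger than the $\gg y_n/\log^2y_n$ required there. Hence $[\fa]$ is a Euclidean ideal class, and $K$ has one conditionally on the Elliott and Halberstam conjecture. I expect the main obstacle to be the careful invocation of \cite[Lemma 19]{GS}: one must verify that its hypotheses are met for the two units $e_1,e_2$ on the sifted set $M_\epsilon$, and confirm that the resulting primitive-root primes all generate the cyclic $Cl_K$, which is guaranteed by the construction of $M_\epsilon$.
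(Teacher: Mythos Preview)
Your proposal is correct and follows essentially the same route as the paper: identify $\hat\sigma$ with a residue $a$ to force $\left(\frac{a-1}{2},f\right)=1$ and invoke \Cref{HBL}; use the Artin symbol to ensure that the primes of $M_\epsilon$ all generate $Cl_K$; then, under Elliott--Halberstam, apply \cite[Lemma~19]{GS} to the two independent units of the real cubic field, pigeonhole into a single generating class, and finish with \Cref{sgl}. If anything, your write-up is more explicit than the paper's about why $a\equiv3\pmod{4}$ and about why the primitive root yields the required surjection $\cO_K^\times\twoheadrightarrow(\cO_K/\fp)^\times$.
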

	
	\begin{proof}
		It is clear that $\Gal(H(K)/K)=\langle \sigma\rangle$, and the map   $\hat{\sigma} \in \Gal \left(\Q(\zeta_f)/\Q\right)$, where $\hat{ \sigma}$ is a lift of $\sigma$.  Therefore there exists $a <f$ and coprime to $f$ such that $\sigma_a=\hat{\sigma}.  $
		If $p\equiv a \mod{f}$  and $\fp$ lies over $p$ then the ideal class $[\fp]$  generates the class group $\Cl_K$. The assumption  $\sigma \not \in  G_{\ell}$ 
		implies $(\frac{a-1}{2},l)=1$. 
		By   Lemma \ref{HBL}, with $u=a$  and $v=f$,  we get
		\begin{align*}
			\Bigg| J_{\epsilon}(X), :=&\Big\{p \ \text{is prime}   ~ \Big \vert\ p\equiv a\mod{f} ,\  p\in(X^{1-\epsilon},X) \quad 
			\text{such that} \ \frac{p-1}{2} \ \text{is either } \\  &\text{ prime or product of two primes}\ q_1q_2 \ with \ X^b<q_1<X^c\Big\}\Bigg|\gg \frac{X}{\log ^2X}.
		\end{align*} 
		
	\end{proof}
	For  the cubic number field $K$  in Theorem \ref{RCE}, we define 
	\begin{align*}
		M_{\epsilon}:=&\Big\{\fp\subseteq \cO_K  \ \text{is prime}  ~ \Big \vert\  \Nm(\fp)=p\ \text{ is a prime,} \ p \in J_{\epsilon}(X), \ \text{for some }  X  \Big\}
	\end{align*} 
	and $$M_{\epsilon}(X)=\{\fp\in M_{\epsilon}~ \Big \vert\Nm(\fp)\leq X \}. $$
	
	Since $K$ is a real cubic field, $\cO_K ^{\times}$ contains two multiplicative independent elements, say $ e_1, e_2$.  Assuming  the Elliott-Halberstam Conjecture is true, by   \cite[Lemma 19]{GS20}, there exists a set $P\subseteq M_{\epsilon}$ such that for every $\fp \in P$ for some  $\eta \in \{\pm e_1,\pm e_2\}$  is primitive modulo $\fp$ and there exists an increasing sequence $\{x_n\}_{n\in \N}$ such that
	\begin{align*}
		\left|P(x_n) \right|\gg \frac{x_n}{\log^2 x_n}. 
	\end{align*}
	There are only finitely many ideal classes in $\Cl_K$, we claim that there exists a non-trivial ideal class $[\fa]$ and  $(y_n)_n$, a subsequence of $(x_n)_n$, such that 
	\begin{align*}
		\Bigg| \Big\{ \fp \in [\fa]~\Big \vert ~\fp\in P,\Nm(\fp)\leq y_n \Big\}\Bigg|\gg \frac{y_n}{\log^2y_n}, 
	\end{align*} 
	
	otherwise we get that \[\biggr | P(x_n)\biggr| =o\left(  \frac{x_n}{\log^2x_n}\right).\]
	Since every prime belongs to $M_\epsilon$ lies over some prime $p\equiv a \mod{f}$, therefore for every $\fp\in M_{\epsilon}$,   $[\fp]$ is a generator of the ideal class group. Therefore $[\fa]$ is a generator of $\Cl_K$.  By Theorem \ref{sgl}, $[\fa]$ is a Euclidean ideal class.
	\hfill $\square$
	
	\section{Examples}
	Some examples of   $q,r$, and $s$  such that all primes are congruent to $3$ modulo $4$ and biquadratic fields $\Q(\sqrt{q},\sqrt{rs})$   have class number two,  are presented in the following tables. The reader is referred to \cite{sp3} for a few more examples of biquadratic fields with class number 2 where one of  $r,s$
	is congruent to $1$  modulo $4$.  All examples are computed using SageMath \cite{sage}. The worksheets  can be accessed at the following GitHub repository:  \href{https://github.com/sunilpasupulati/existence-of-a-non-principal-Euclidean-ideal-class-in-biquadratic-fields}{https://github.com/sunilpasupulati/existence-of-a-non-principal-Euclidean-ideal-class-in-biquadratic-fields}.\\

	\begin{center}
		\begin{tabular}{ p{4cm} p{3cm}p{4cm}p{3cm}}
			\hline 	\hline 
			$(q,r,s)$ & $h_{\Q(\sqrt{q},\sqrt{rs})}$ & $(q,r,s)$ & $h_{\Q(\sqrt{q},\sqrt{rs})}$    \\
			\hline	\hline 
			
			(11,19,31) &2  &(11,23,31) &2 \\
			(19,23,31) &2 &(11,47,31) &2 \\
			(47,19,31) &2 &	(47,19,31) &2 \\
			(23,19,31) &2& (59,19,31)&2\\
			\hline
			
		\end{tabular}\\
		
		\vspace{0.4cm}
		Table 1: Examples of $\Q(\sqrt{q},\sqrt{kr})$ with $q\equiv 3, k\equiv 3 , q\equiv 3 \pmod{4}$ and having class number 2.
	\end{center}

	Now we present some examples of biquadratic fields satisfying the assumptions of Corollary \ref{cor32}. These examples are collected from the $L$-functions and modular forms database (LMFDB)~\cite{lmfdb}.
	\begin{center}
		\begin{tabular}{p{2cm} p{2cm}p{3cm}p{2cm}p{2cm}p{3cm}}
			\hline  	\hline  
			$(q,r)$ &$h_{\Q(\sqrt{qr})}$& $h_{\Q(\sqrt{q},\sqrt{r})}$ & $(q,r)$&$h_{\Q(\sqrt{qr})}$ & $h_{\Q(\sqrt{q},\sqrt{r})}$    \\
			\hline
			\hline  
			(17,41)&6 &3 &(5,1637)& 14&7 \\
			(5,157)&6 &3 &(5,1693)& 14&7 \\
			(5,47)& 6&3 &	(2,1009)&14 &7 \\
			(3,49) &6&3 & (2,1013)&14&7\\
			(5,197) &6&3 &	(5,1297) &22&11 \\
			(2,401) &10&5& (2,1297)&22&11\\
			(17,241) &10&5 &	(57,633) &22&11 \\
			(5,577) &10&7& (17,1297)&22&11\\
			\hline
			
		\end{tabular}
		\\
		
		\vspace{0.4cm}
		Table 2: Examples of $\Q(\sqrt{q},\sqrt{r})$ with satisfying assumptions of  Corollary \ref{cor32} 
	\end{center}
	
	%In the proof of \ref{thm41}, we use the fact that $K$ is totally real but if $K$ is not real proof wiil not work. So the existence of Euclidean ideals, in general biquadratic and  Triquadratic, still remain open.
	The proof of Theorem \ref{thm41} relies on the assumption that $K$ is a totally real field. Without this assumption, the proof would not hold. As a result, the question of whether Euclidean ideals exist in general biquadratic and triquadratic fields remains open.  In case of  totally real triquadratic field $K$, the unit rank is $7$ and therefore by virtue of Graves
	and Murty’s result, it essentially boils down to check whether the Hilbert class field $H(K)$ of $K$ is
	abelian over Q or not. But 
	the existence of Euclidean ideals in general  biquadratic and triquadratic fields is still an unsolved problem. These fields have unique properties, especially when they are not real, which can significantly affect their unit groups so is the ideal class groups. Therefore, investigating these cases requires different techniques and approaches from those used in totally real fields.	
	\section*{Acknowledgments}
	We acknowledge IISER Thiruvananthapuram and IMSc chennai for providing excellent working conditions. 
 The first author thanks to the Institute of Mathematical Sciences (IMSc), where part of this work was conducted, and acknowledges the support received through an IMSc postdoctoral fellowship. We also express our sincere gratitude to Prof. Lawrence C. Washington, Prof. Pieter Moree, and Prof. Sonali Gun for their valuable comments and suggestions. The second author's research was supported by SERB grant CRG/2023/009035.

	\bibliographystyle{amsplain} 
	\bibliography{References.bib}

\providecommand{\bysame}{\leavevmode\hbox to3em{\hrulefill}\thinspace}
\providecommand{\MR}{\relax\ifhmode\unskip\space\fi MR }
% \MRhref is called by the amsart/book/proc definition of \MR.
\providecommand{\MRhref}[2]{%
  \href{http://www.ams.org/mathscinet-getitem?mr=#1}{#2}
}
\providecommand{\href}[2]{#2}
\begin{thebibliography}{10}

\bibitem{JS19}
Jaitra Chattopadhyay and Subramani Muthukrishnan, \emph{Biquadratic fields having a non-principal {E}uclidean ideal class}, J. Number Theory \textbf{204} (2019), 99--112. \MR{3991414}

\bibitem{DGS}
J.-M. Deshouillers, S.~Gun, and J.~Sivaraman, \emph{On {E}uclidean ideal classes in certain abelian extensions}, Math. Z. \textbf{296} (2020), no.~1-2, 847--859. \MR{4140766}

\bibitem{dumithershy23}
David~S. Dummit and Hershy Kisilevsky, \emph{Unit signature ranks in real biquadratic and multiquadratic number fields}, Tokyo J. Math. \textbf{46} (2023), no.~2, 401--424. \MR{4690603}

\bibitem{EH}
P.~D. T.~A. Elliott and H.~Halberstam, \emph{A conjecture in prime number theory}, Symposia {M}athematica, {V}ol. {IV} ({INDAM}, {R}ome, 1968/69), Academic Press, London, 1970, pp.~59--72. \MR{0276195}

\bibitem{G1}
Hester Graves, \emph{{$\Bbb Q(\sqrt{2},\sqrt{35})$} has a non-principal {E}uclidean ideal}, Int. J. Number Theory \textbf{7} (2011), no.~8, 2269--2271. \MR{2873154}

\bibitem{growth}
\bysame, \emph{Growth results and {E}uclidean ideals}, J. Number Theory \textbf{133} (2013), no.~8, 2756--2769. \MR{3045214}

\bibitem{RG13}
Hester Graves and M.~Ram Murty, \emph{A family of number fields with unit rank at least 4 that has {E}uclidean ideals}, Proc. Amer. Math. Soc. \textbf{141} (2013), no.~9, 2979--2990. \MR{3068950}

\bibitem{GS20}
Sanoli Gun and Jyothsnaa Sivaraman, \emph{On existence of {E}uclidean ideal classes in real cubic and quadratic fields with cyclic class group}, Michigan Math. J. \textbf{69} (2020), no.~2, 429--448. \MR{4104380}

\bibitem{HB}
D.~R. Heath-Brown, \emph{Artin's conjecture for primitive roots}, Quart. J. Math. Oxford Ser. (2) \textbf{37} (1986), no.~145, 27--38. \MR{830627}

\bibitem{CH}
Catherine Hsu, \emph{Two classes of number fields with a non-principal {E}uclidean ideal}, Int. J. Number Theory \textbf{12} (2016), no.~4, 1123--1136. \MR{3484301}

\bibitem{genustheorybook}
Makoto Ishida, \emph{The genus fields of algebraic number fields}, Lecture Notes in Mathematics, vol. Vol. 555, Springer-Verlag, Berlin-New York, 1976. \MR{435028}

\bibitem{errorterm80}
Henryk Iwaniec, \emph{A new form of the error term in the linear sieve}, Acta Arith. \textbf{37} (1980), 307--320. \MR{598883}

\bibitem{sp3}
Srilakshmi Krishnamoorthy and Sunil~Kumar Pasupulati, \emph{Non-principal {E}uclidean ideal class in a family of biquadratic fields with the class number two}, Arch. Math. (Basel) \textbf{120} (2023), no.~3, 263--271. \MR{4549760}

\bibitem{len}
H.~W. Lenstra, Jr., \emph{Euclidean ideal classes}, Journ\'{e}es {A}rithm\'{e}tiques de {L}uminy ({C}olloq. {I}nternat. {CNRS}, {C}entre {U}niv. {L}uminy, {L}uminy, 1978), Ast\'{e}risque, vol.~61, Soc. Math. France, Paris, 1979, pp.~121--131. \MR{556669}

\bibitem{lmfdb}
The {LMFDB Collaboration}, \emph{The {L}-functions and modular forms database}, \url{https://www.lmfdb.org}, 2023, [Online; accessed 24 February 2023].

\bibitem{NW07}
W\l adys\l~aw Narkiewicz, \emph{Euclidean algorithm in small abelian fields}, Funct. Approx. Comment. Math. \textbf{37} (2007), no.~part 2, 337--340. \MR{2363830}

\bibitem{SP95}
Patrick~J. Sime, \emph{On the ideal class group of real biquadratic fields}, Trans. Amer. Math. Soc. \textbf{347} (1995), no.~12, 4855--4876. \MR{1333398}

\bibitem{sage}
W.\thinspace{}A. Stein et~al., \emph{{S}age {M}athematics {S}oftware ({V}ersion x.y.z)}, The Sage Development Team, {\tt http://www.sagemath.org}.

\bibitem{WP}
Peter~J. Weinberger, \emph{On {E}uclidean rings of algebraic integers}, Analytic number theory ({P}roc. {S}ympos. {P}ure {M}ath., {V}ol. {XXIV}, {S}t. {L}ouis {U}niv., {S}t. {L}ouis, {M}o., 1972), 1973, pp.~321--332. \MR{0337902}

\end{thebibliography}
\end{document}